\newtheorem{theorem}{Theorem}[section]
\newtheorem{lemma}[theorem]{Lemma}
\newtheorem{definition}{Definition}[section]%
\newtheorem{assumption}{Assumption}[section]
\newtheorem{remark}{Remark}[section]
\numberwithin{equation}{section}
\title{Exponential turnpike property for particle systems and mean-field limit}
\author{
Michael Herty \thanks{Chair in Numerical Analysis, IGPM, RWTH Aachen University, Templergraben, 55, D-52062 Aachen, Germany (herty@igpm.rwth-aachen.de)}\qquad
Yizhou Zhou\thanks{IGPM, RWTH Aachen University, Templergraben, 55, D-52062 Aachen, Germany (zhou@igpm.rwth-aachen.de)} 
}
\date{\today}
\begin{document}
\maketitle{}

\begin{abstract}
    This work is concerned with the exponential turnpike property for optimal control problems of  particle systems and their mean-field limit. Under the  assumption of the strict dissipativity of  the cost function,   exponential estimates for both optimal states and optimal control are proven. Moreover, we show that all the results for particle systems can be preserved under the limit in the case of infinitely many particles.
\end{abstract}

\hspace{-0.5cm}\textbf{Keywords:}
\small{Exponential turnpike property, mean-field limit, optimal control}\\

\hspace{-0.5cm}\textbf{AMS subject classification:} \small{93C20, 35L04, 35Q89, 49N10}

\section{Introduction}

For optimal control problems of time-dependent differential equations, the exponential turnpike property states, that the optimal solution remains (exponentially) close to a reference solution. Usually, this reference solution is taken as the optimal solution to the corresponding static problem. The concept of turnpike was first introduced  
for discrete-time optimal control problems \cite{DSS,Sa}. Since then, many turnpike results have been established and there has been recent interest in the mathematical community \cite{MR3217211,MR3470445,MR4079009,receding,use-2,MR4402854}.


In the present work, we focus on the exponential turnpike phenomenon for optimal control problems of a class of interacting particle systems and their mean-field limit equations. Important applications for these systems occur in the fields of swarm robotics \cite{CKPP}, crowd dynamics \cite{ABCK}, traffic management \cite{TZ}, or opinion dynamics \cite{AHP} to name but a few.

The original formulation of the interacting particle system is usually at the so--called microscopic level and given by a coupled system of  ODEs. 
Alternatively, one can also focus on the collective behavior by considering the probability density distribution of the particles and investigating the corresponding McKean-Vlasov or mean-field equation, see e.g. \cite{MR3642940,MR3969953,MR4014449} for results involving control actions. The control of large--scale interacting particle systems has gained recent interest, see e.g., 
 \cite{AHP,sparse,sparse-Cucker}. The control of high--dimensional system is challenging and current approaches resort to e.g. using Riccati--based  \cite{MR3431287,MR4469721}, moment--driven control \cite{MR4399019}, or model predicitive control approaches \cite{MR3894072,AHP,MR4288153}. Motivated by this, we aim to utilize the turnpike property 
 to control those high--dimensional systems~\cite{use-1,use-1.1,MR3973342}. More precisely,  we prove the exponential turnpike estimate for ODE systems with an arbitrary particle number and show that the property also holds in the mean-field limit. Here, we utilize the particular structure of interacting particle systems to derive the turnpike property. 

The topic of turnpike property for mean-field optimal control problems has been studied recently in \cite{main}. At this point, we would compare \cite{main} with the present paper and point out our main contributions. (1) In \cite{main}, the authors prove the turnpike property with interior decay \cite{interior-original}  which is a time integral property \cite{MR4493557}. In the present paper, under similar assumptions (with a minor modification), we present a point-wise exponential estimate which is more quantitative. (2) In addition to the estimate of the optimal solution, we also prove the exponential decay for the optimal control. 

As in \cite{main}, our basic assumption is that the optimal control problems satisfy a strict dissipativity inequality. By considering a feedback control, we obtain the cheap control inequality. Then, we use this inequality iteratively to prove the exponential estimate for the optimal solution. 
This iteration technique has also been used to prove the turnpike property for other optimal control problems (See i.e \cite{EGPZ2}).
Note that all the estimates for particle systems are independent of the particle number $N$. Thus all  results are also expected in the mean-field level as $N\rightarrow \infty$. By using convergence in the Wasserstein distance and the lower semi-continuity of the cost function, we prove the corresponding exponential decay property for the solution of the mean-field optimal control problem.
In order to establish the exponential decay for the optimal control, we design a specific feedback control, see also  \cite{EGPZ}. In this way, the optimal control can be bounded by the optimal solution. Combining with the estimate for solutions, we also prove the exponential decay property for the optimal control with respect to time $t$. 


The paper is organized as follows. In Section \ref{Section2}, we state the problem and present some basic assumptions. 
In Section \ref{Section2.1}, we prove the cheap control property for the optimal control problem of the particle system. By considering the limit $N\rightarrow \infty$, we prove the same property in the mean-field level. Based on these results, we prove the exponential turnpike property for both the particle system and the mean-field problem in Section \ref{Section 4}. At last, the auxiliary estimate in the Wasserstein distance is given in Appendix \ref{sectionA}. 
The main results are Theorem \ref{thm14} on the exponential turnpike property for the particle system and Theorem \ref{theorem-m-s}-\ref{thm14-c} for the mean-field problem.


\section{Preliminaries}\label{Section2}

Consider the optimal control problem $\mathcal{Q}(0,T,\mu_0)$:
\begin{align}
\mathcal{V}(0,T,\mu_0)=&~\min_{u\in\mathcal{F}}  \int_{0}^T f(\mu(t,x),u(t,x)) dt, \nonumber \\[2mm]
    :=&~\min_{u\in\mathcal{F}} \int_{0}^T \int L(x)d\mu(t,x) dt + \int_{0}^T \int \Psi(u(x,t))d\mu(t,x) dt. \label{mec}
\end{align}
Here $\mu(t,\cdot)\in P_2(\mathbb{R}^d)$ is a probability measure
on $\mathbb{R}^d$ defined for $t\in [0, T]$ and it satisfies the following equation in a distributional sense
\begin{eqnarray}\label{meq}
    \begin{aligned}
    &\partial_t\mu + \nabla_x\cdot\Big((P\ast \mu + u)\mu\Big)  =0,\qquad 0<t<T,\quad x\in\mathbb{R}^d,\\[2mm]
    &\mu(0,x) = \mu_0(x).
\end{aligned}
\end{eqnarray}
Here, $P(x)\in\mathbb{R}^d$ is a vector-valued function and
$$
(P \ast \mu)(x,t) = \int_{\mathbb{R}^d} P(x-y)d\mu(t,y).
$$
As that in \cite{FS}, we take the control $u(t,x)\in \mathcal{F}$ satisfying
\begin{definition}
    Fix a control bound $0<C_B<\infty$. Then $u(t,x)\in\mathcal{F} $ if and only if
    \begin{itemize}
        \item[(i)] $u:[0, T] \times \mathbb{R}^d \rightarrow \mathbb{R}^d$ is a Carathéodory function. 
        \item[(ii)] $u(t,\cdot)\in W^{1,\infty}_{loc}(\mathbb{R}^d)$ 
 for almost every $t \in [0, T]$.
        \item[(iii)] $|u(t,0)| + \|u(t,\cdot)\|_{Lip} \leq C_B$ for almost every $t \in [0, T]$.
    \end{itemize}
\end{definition}
\begin{remark}
    In \cite{FS}, the control bound can be chosen as an integrable function $l(t)\in L^q(0,T)$ for $1\leq q <\infty$. For simplicity, we take the bound to be constant.
\end{remark}

Next, we show assumptions for the optimal control problem \eqref{meq}. 
\begin{assumption}\label{ass1}
    The cost function $f$ satisfies the following assumptions:
    \begin{itemize}
        \item[(i)] Strict dissipativity: there exists a constant $C_D$ such that, for any $b\geq a\geq 0$ and any pairs $(\mu(t,x),u(t,x))\in P_2(\mathbb{R}^d)\times \mathcal{F}$, the following inequality holds
        $$
        \int_a^b f(\mu(t,x),u(t,x)) dt \geq C_D 
        \int_a^b \int_{\mathbb{R}^d}\Big(|x-\bar{x}|^2 +|u(t,x)|^2 \Big) d\mu(t,x)dt .
        $$
        \item[(ii)] There exist constants $C_{\Psi}$ and $C_{L}$ such that 
\begin{equation}
  \Psi(u)\leq C_{\Psi}|u|^2\quad \text{and} \quad  
L(x)\leq C_L |x-\bar{x}|^2.  
\end{equation}
It holds for all $x\in B(\bar{x}, R) := \{x \in \mathbb{R}^d : |x-\bar{x}| < R\}$ and $u\in B(0,R)$.
        \item[(iii)] 
        The interaction function $P(x)$ satisfies $P(0)=0$ and the following Lipschitz property:
\begin{equation}\label{Lip-P}
|P(x)-P(y)|\leq C_p|x-y|,\qquad \forall ~x\in \mathbb{R}^d
\end{equation}
with $C_P>0$ a constant.

    \end{itemize}
\end{assumption}

\begin{remark}
    These assumptions are also used in \cite{main} except for condition (ii). Here, we need to assume that both $\Psi$ and $L$ can be bounded by quadratic functions. Note that, this assumption is also satisfied for the example of \cite{main}. 
\end{remark}

For further discussion of the optimal control problem, we consider the empirical measure on $[0,T] \times \mathbb{R}^d$
\begin{equation}\label{empirical}
\mu_{N}(t,x) = \frac{1}{N}\sum_{i=1}^N \delta\left(x-x_i(t)\right).
\end{equation}
Here, $x_i(t)$ $(i=1,2,...,N)$ is the solution to the optimal control problem $\mathcal{Q}_N(0,T,x_0)$:
\begin{eqnarray}\label{particle-pb}
\begin{aligned}
    \mathcal{V}_N(0,T,x_0)
    =&~\min_{u_N\in\mathcal{F}}  \frac{1}{N}\sum_{i=1}^N\int_{0}^T L(x_i(t)) + \Psi(u_N(t,x_i(t))) dt, \\[2mm]
    \frac{dx_i(t)}{dt} =&~ \frac{1}{N}\sum_{j=1}^NP(x_i(t)-x_j(t)) + u_N(t,x_i(t)), \\[2mm]
    x_i(0) =&~ x_{i0}.
\end{aligned}
\end{eqnarray}
Here, $x(t)=(x_1(t),x_2(t),...,x_N(t))$ represents $N$ particles, $x_0=(x_{10},x_{20},...,x_{N0})$ is the initial data, and $u_{N}(t,x_i(t))$ is the control. We use the subscript $N$ to emphasize the dependence of the optimal control $u_N$ of \eqref{particle-pb}  on the number of particles $N$.

\begin{remark}\label{remark21}
Problem $\mathcal{Q}_N(0,T,x_0)$ can be formally derived from the original optimal control problem. For any $N$ we have 
\begin{align*}
    f(\mu_{N},u_N) =&~ \int L(x)d\mu_{N}(t,x)  +  \int \Psi(u_N(t,x))d\mu_{N}(t,x)\\[2mm] =&~ \frac{1}{N}\sum_{i=1}^N \Big[L(x_i(t)) + \Psi(u_N(t,x_i(t)))\Big].
\end{align*}
which implies that the cost function in \eqref{particle-pb} is given by 
$$
\mathcal{V}_N(0,T,x_0)
    =\min_{u_N\in\mathcal{F}} \int_{0}^T f(\mu_{N},u_N) dt.
$$
\end{remark}


As outlined in the remark, the optimal control problem \eqref{particle-pb} and the original problem are intertwined. Under Assumptions \ref{ass1}, the existence and uniqueness of the problem \eqref{meq}-\eqref{mec} has been established in \cite{FS}. To recall the theorem,  the definition of the $p-$Wasserstein distance between two probability measures $\mu$ and $\nu$ is given:
$$
\mathcal{W}_p(\mu,\nu) = \inf_{\gamma \in \Gamma(\mu,\nu)} \left( \int_{\mathbb{R}^{2d}} |x-y|^p d\gamma(x,y) \right)^{1/p}.
$$
Here, $\Gamma(\mu,\nu)$ denotes the set of transport plans, i.e., collection of all probability measures with marginals $\mu$ and $\nu$, see also \cite{MR2459454}.

\begin{theorem}\label{thm21}
Assume that the initial data $\mu_0$ in \eqref{meq} is compactly supported, i.e., there exists $R > 0$ such that
$\text{supp}~ \mu_0 \subset B(0, R)$. Moreover, the empirical measure
$\mu_N(x,0)$ converges to $\mu_0$ in $\mathcal{W}_1$ distance. Then, there exists an optimal control $u(t,x)$ and a weak equi-compactly supported solution $\mu(t,x)$ to the problem \eqref{mec}-\eqref{meq}. Namely, for all $t\in [0,T]$ the distribution $\mu(t,x) \in C([0, T];P_1(\mathbb{R}^d))$ satisfies $\text{supp}~ \mu(t,\cdot) \subset B(0, R)$ and 
\begin{eqnarray}\label{weak}
\begin{aligned}
    &\int \phi(t,x)d\mu(t,x) -\int \phi(0,x)d\mu_0(x) \\[2mm]
    = &\int_{0}^{t} \int \Big[\partial_t \phi + \nabla_x \phi \cdot (P\ast \mu + u) \Big]d\mu(s,x)ds,\qquad \forall~\phi\in C_0^{\infty}([0,T]\times \mathbb{R}^d). 
\end{aligned}
\end{eqnarray}
The optimal solution satisfies
$$
\lim_{k\rightarrow \infty}\mathcal{W}_1(\mu_{N_k}(t,\cdot), \mu(t)) = 0
$$
uniformly with respect to $t \in [0, T]$ and $u_{N_k}$ converges to $u$ in $\mathcal{F}$. Here $\mu_{N_k}$ is given by \eqref{empirical} and $(x_i(t),u_{N_k}(t,x))$ is the optimal solution to \eqref{particle-pb} with $N_k$ particles. Moreover, $f$ is lower semi-continuous:
\begin{equation}
\int_0^T f(\mu(t,x),u(t,x))dt\leq \liminf_{k\rightarrow \infty}\int_0^T f(\mu_{N_k}(t,x),u_{N_k}(t,x))dt.    
\end{equation}
\end{theorem}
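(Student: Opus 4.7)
The statement collects existence, compactness, and lower-semicontinuity assertions whose proofs closely follow the framework of \cite{FS}; my plan is to transcribe that strategy while checking compatibility with the quadratic bounds imposed in Assumption~\ref{ass1}(ii). The argument has four steps: (a) existence at the particle level for each fixed $N$; (b) $N$-uniform a priori bounds; (c) extraction of a limit for $(\mu_{N_k},u_{N_k})$; (d) identification of the limit as an optimal pair via lower semi-continuity.

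For step (a), the dynamics in \eqref{particle-pb} are globally Lipschitz in $x$, since $P$ is Lipschitz by \eqref{Lip-P} and any admissible $u_N$ satisfies $|u_N(t,0)|+\|u_N(t,\cdot)\|_{Lip}\le C_B$. The running cost is non-negative by strict dissipativity, so the existence of a minimizer in $\mathcal{F}$ follows from a Filippov-type direct method on the bounded, sequentially closed class $\mathcal{F}$. For step (b), using $P(0)=0$ and the uniform control bound, a Gronwall estimate on $\max_i|x_i(t)|$ produces a radius $R'=R'(R,T,C_B,C_P)$ with $\operatorname{supp}\mu_N(t,\cdot)\subset B(0,R')$ independent of $N$, and the same velocity bound yields
\begin{equation*}
\mathcal{W}_1(\mu_N(t,\cdot),\mu_N(s,\cdot))\le C|t-s|.
\end{equation*}

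For step (c), since $P_1(B(0,R'))$ is compact under $\mathcal{W}_1$, Ascoli--Arzelà in $C([0,T];P_1(B(0,R')))$ extracts a subsequence $\mu_{N_k}\to\mu$ uniformly in $t$, with $\mu\in C([0,T];P_1(\mathbb{R}^d))$ equi-compactly supported in $B(0,R')$. The uniform Lipschitz-in-$x$ and pointwise bound on $u_{N_k}$ makes $\{u_{N_k}(t,\cdot)\}_k$ equi-continuous on $B(0,R')$ for a.e.\ $t$; Arzelà--Ascoli fibrewise plus a measurable diagonal argument in $t$ yields a further subsequence converging to some $u\in\mathcal{F}$. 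Passing to the limit in the weak form \eqref{weak} satisfied by $\mu_{N_k}$ (cf.\ Remark~\ref{remark21}) is then routine: the linear terms pass by $\mathcal{W}_1$-convergence, and the nonlinear flux $(P\ast\mu_{N_k}+u_{N_k})\mu_{N_k}$ is controlled using the Lipschitz bound \eqref{Lip-P} and the chosen mode of convergence of the controls.

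For step (d) I would split $f=L+\Psi$: continuity of $L$ on the common compact support combined with uniform $\mathcal{W}_1$-convergence handles the first term, while for $\int\Psi(u_{N_k})\,d\mu_{N_k}$ one invokes convexity/lower semi-continuity of $\Psi$ together with the joint convergence of $(u_{N_k},\mu_{N_k})$. Optimality of the limit pair then follows by comparing $\mathcal{V}(0,T,\mu_0)$ against competitors obtained by evaluating any admissible $\tilde u\in\mathcal{F}$ against $\mu_{N_k}(0,\cdot)$, letting $N_k\to\infty$, and invoking the same lower semi-continuity. I expect the main obstacle to be exactly this last step, because the controls live on all of $\mathbb{R}^d$ while the $\mu_{N_k}$ concentrate on $N_k$ moving atoms; the joint $\liminf$ must be taken along consistent disintegrations, which is precisely where the Lipschitz-in-$x$ hypothesis on $\mathcal{F}$ is essential.
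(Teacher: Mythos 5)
This theorem is not proved in the paper at all: the authors state explicitly that ``the existence and uniqueness of the problem \eqref{meq}--\eqref{mec} has been established in \cite{FS}'' and simply recall the result, so there is no in-paper argument to compare yours against. Your four-step sketch is essentially the strategy of \cite{FS} (particle-level existence, $N$-uniform support and equi-Lipschitz-in-time bounds, Ascoli--Arzel\`a compactness in $C([0,T];P_1)$, and a $\Gamma$-convergence/lower-semicontinuity identification of the limit), and the a priori estimates in steps (a)--(b) are sound; your observation that the uniform support radius should be some $R'=R'(R,T,C_B,C_P)$ rather than the $R$ appearing in the statement is in fact more careful than the theorem as quoted. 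The two places where your sketch is genuinely thinner than the real argument are exactly the ones you flag as delicate. First, the compactness of the control class: a ``fibrewise Arzel\`a--Ascoli plus a measurable diagonal argument in $t$'' does not by itself produce a single subsequence converging for a.e.\ $t$ to an element of $\mathcal{F}$, because $u_{N_k}$ has no continuity in $t$; in \cite{FS} this is a standalone compactness theorem for $\mathcal{F}$ proved with a weak-$L^q$-in-time argument, and the mode of convergence of $u_{N_k}$ to $u$ is correspondingly weaker than locally uniform. Second, the joint lower semicontinuity of $\int_0^T\int\Psi(u_{N_k})\,d\mu_{N_k}\,dt$ under $\mathcal{W}_1$-convergence of the measures and this weak convergence of the controls is not ``convexity plus joint convergence'' in any elementary sense; it is the $\liminf$ inequality of a $\Gamma$-convergence result in \cite{FS} and requires the Lipschitz structure of $\mathcal{F}$ precisely to upgrade weak convergence of the controls to convergence of the composed quantities $u_{N_k}(t,x_i(t))$ against the moving atoms. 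If you intend this as a self-contained proof rather than a pointer to \cite{FS}, those two steps need to be carried out in full; as a reconstruction of the cited argument, the outline is faithful.
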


For the exponential stability later, we discuss solutions $\mu(t,x)$ in $C([0,T];P_2(\mathbb{R}^d))$ with metric $\mathcal{W}_2$.
By adapting the method in \cite{CCR,FS}, we have

\begin{lemma}\label{lemma-W2-stability}
For fixed control $u(t,x)$, if $\mu(t,x)$, $\nu(t,x)$ are solutions to \eqref{meq} with initial data $\mu_0$ and $\nu_0$ satisfying the assumption in Theorem \ref{thm21}, then there is a constant $C > 0$ such that
$$\mathcal{W}_2(\mu(t,\cdot),\nu(t,\cdot)) \leq e^{Ct}~\mathcal{W}_2(\mu_0,\nu_0)\quad \text{for} \quad t \in [0, T].
$$
\end{lemma}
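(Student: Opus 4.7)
\medskip

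The plan is to pass to the Lagrangian (characteristic) formulation of \eqref{meq} and transport an optimal coupling of the initial data forward in time, in the spirit of the Dobrushin/Neunzert stability argument used in \cite{CCR,FS}. Since the control $u(t,x)$ is fixed and lies in $\mathcal{F}$, for each initial point the velocity field $V_\mu(t,x) := (P\ast\mu)(t,x) + u(t,x)$ is Lipschitz in $x$ uniformly in $t$ (because $P$ is Lipschitz by \eqref{Lip-P} and $u(t,\cdot)$ has Lipschitz constant $\le C_B$). Hence one may define characteristic flows $X_t, Y_t : \mathbb{R}^d \to \mathbb{R}^d$ by
\begin{equation*}
\dot X_t(x) = (P\ast\mu)(t,X_t(x)) + u(t,X_t(x)),\qquad X_0(x)=x,
\end{equation*}
and analogously $Y_t$ driven by $\nu$. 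Standard ODE arguments combined with the uniqueness in Theorem~\ref{thm21} give $\mu(t,\cdot)=X_{t\#}\mu_0$ and $\nu(t,\cdot)=Y_{t\#}\nu_0$.

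Next, I would fix an optimal transport plan $\gamma_0 \in \Gamma(\mu_0,\nu_0)$ attaining $\mathcal{W}_2(\mu_0,\nu_0)$ and push it forward via $(X_t,Y_t)$: the resulting plan $\gamma_t = (X_t,Y_t)_\#\gamma_0$ is admissible between $\mu(t,\cdot)$ and $\nu(t,\cdot)$, so
\begin{equation*}
\mathcal{W}_2^2(\mu(t,\cdot),\nu(t,\cdot)) \le \Phi(t) := \int_{\mathbb{R}^{2d}} |X_t(x)-Y_t(y)|^2 \, d\gamma_0(x,y).
\end{equation*}
Differentiating and applying the ODEs gives
\begin{equation*}
\tfrac{1}{2}\Phi'(t) = \int (X_t-Y_t)\cdot\bigl[(P\ast\mu)(X_t)-(P\ast\nu)(Y_t) + u(t,X_t)-u(t,Y_t)\bigr]\,d\gamma_0.
\end{equation*}

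The control term is immediately bounded by $C_B |X_t-Y_t|$. For the convolution term, the key step is to re-express it through the coupling: using the marginal property of $\gamma_0$,
\begin{equation*}
(P\ast\mu)(X_t(x)) - (P\ast\nu)(Y_t(y)) = \int \bigl[P(X_t(x)-X_t(x'))-P(Y_t(y)-Y_t(y'))\bigr]\,d\gamma_0(x',y'),
\end{equation*}
which, by \eqref{Lip-P} and the triangle inequality, is bounded by $C_P|X_t(x)-Y_t(y)| + C_P\int |X_t(x')-Y_t(y')|\,d\gamma_0(x',y')$. Inserting this and applying Cauchy--Schwarz to the cross term (so that $\bigl(\int|X_t-Y_t|\,d\gamma_0\bigr)^2 \le \Phi(t)$) yields $\Phi'(t) \le 2(C_B + 2C_P)\,\Phi(t)$. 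Grönwall's inequality then gives $\Phi(t)\le e^{2(C_B+2C_P)t}\Phi(0) = e^{2(C_B+2C_P)t}\mathcal{W}_2^2(\mu_0,\nu_0)$, and taking square roots produces the claim with $C = C_B + 2C_P$.

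I expect the main technical point to be the clean handling of the nonlocal difference $(P\ast\mu)(X_t)-(P\ast\nu)(Y_t)$: the trick is to rewrite both convolutions against the same transport plan $\gamma_0$ rather than against $\mu_0$ and $\nu_0$ separately, because only then does the Lipschitz estimate close in terms of $\Phi(t)$ alone. The rest is Grönwall. A secondary subtlety is confirming that the flow $X_t$ is well-defined and that $\mu(t,\cdot)=X_{t\#}\mu_0$ in the weak sense of \eqref{weak}, but this is standard once $V_\mu$ is Lipschitz uniformly in $t$, which holds under Assumption~\ref{ass1} and $u\in\mathcal{F}$.
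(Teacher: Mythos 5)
Your argument is correct, and it arrives at the same exponential rate $C=2C_P+C_B$ as the paper, but by a genuinely different route. The paper does not couple the two flows directly: it inserts the intermediate measure $\mathcal{T}^{\mu}_t \sharp \nu_0$ and splits via the triangle inequality into (a) the same flow applied to the two initial data, controlled by the Lipschitz constant $C_P+C_B$ of the characteristics (Lemma \ref{lemma-a1}), and (b) the two different flows applied to the same initial datum $\nu_0$, controlled by $\|\mathcal{T}^{\mu}_t-\mathcal{T}^{\nu}_t\|_\infty$, which is in turn bounded using $\|(P\ast\mu)(s,\cdot)-(P\ast\nu)(s,\cdot)\|_\infty \le C_P\,\mathcal{W}_2(\mu(s),\nu(s))$ (Lemma \ref{lemma-a3}); this leaves an integral inequality in $\mathcal{W}_2(\mu(t),\nu(t))$ itself, closed by a second application of Gr\"onwall. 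Your approach instead pushes a single optimal plan $\gamma_0$ forward through the pair $(X_t,Y_t)$ and runs one differential Gr\"onwall estimate on $\Phi(t)=\int|X_t-Y_t|^2\,d\gamma_0$; the decisive step, rewriting both convolutions against the common plan $\gamma_0$ so that the nonlocal term closes in $\Phi(t)$ via Cauchy--Schwarz, is exactly right and replaces the paper's sup-norm comparison of the two velocity fields. Your version is more self-contained and avoids the double Gr\"onwall; the paper's version has the advantage of isolating reusable sublemmas (stability of a fixed flow in $\mathcal{W}_2$, and a flow-comparison bound) that mirror the $\mathcal{W}_1$ arguments of \cite{CCR,FS}. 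The only points you should make explicit in a final write-up are the justification for differentiating $\Phi$ under the integral sign (immediate from the equi-compact support of the solutions and the boundedness of the velocity field on that support) and the representation $\mu(t,\cdot)=X_{t\#}\mu_0$, which the paper also takes as given.
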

\noindent 
Some remarks are in order. The proof is similar to  \cite{CCR,FS} for the stability in $\mathcal{W}_1$ and deferred to the Appendix \ref{sectionA}. 
Hence, the optimal solution is unique in $C([0,T];P_2(\mathbb{R}^d))$ if the initial data $\mu_0\in P_2(\mathbb{R}^d)$. Due to this argument, we assume that the optimal solution $\mu(t,x)$ also satisfies
\begin{equation}
\lim_{k\rightarrow \infty}\mathcal{W}_2(\mu_{N_k}(t,\cdot), \mu(t,\cdot)) = 0
\end{equation}
uniformly with respect to $t \in [0, T]$. The assumption is justified since we have the convergence in $\mathcal{W}_1$  and the uniform boundness of the second order moment for $\mu_N(t,\cdot)$ with respect to  $N$, see e.g.  Theorem \ref{thm14}. 




\section{Cheap control property}\label{Section2.1}


The cheap control property of the optimal control problem shows that the optimal values are bounded by the distance between the initial state and the desired static state. Combining the cheap control property with the strict dissipativity, we  provide a bound on the second--order moments of the proability density. More specifically, for the $N$-particles system 
\eqref{particle-pb}, we prove:

\begin{lemma}\label{lemma11}
    Suppose $u_N$ is an optimal control to the problem $\mathcal{Q}_N(0,T,x_0)$ and $x(t)$ is the corresponding solution, then $u_N|_{t\in [a,T]}$ is also an optimal control to the sub-problem $\mathcal{Q}_N(a,T,x(a))$ for any $a\geq 0$. Moreover, the following inequality holds under Assumption \ref{ass1}:
    \begin{align}
\frac{1}{N} \sum_{i=1}^N \int_a^T |x_i(t)-\bar{x}|^2 +   |u_N(t,x_i(t))|^2dt  
\leq C_0 \frac{1}{N}\sum_{i=1}^N |x_i(a)-\bar{x}|^2.   \label{cheap} 
\end{align}
Here, $C_0$ is a positive constant independent of $N$.
\end{lemma}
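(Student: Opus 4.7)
The first assertion is the dynamic programming principle, which I would prove by contradiction. If $u_N|_{[a,T]}$ were not optimal for $\mathcal{Q}_N(a,T,x(a))$, some admissible $\tilde u$ on $[a,T]$ would give a strictly smaller cost. Concatenating $u_N|_{[0,a]}$ with $\tilde u$ on $[a,T]$ produces an admissible control on $[0,T]$ whose cost splits additively across $[0,a]$ and $[a,T]$ and is therefore strictly less than $\mathcal{V}_N(0,T,x_0)$, contradicting the optimality of $u_N$. Admissibility of the spliced control in $\mathcal{F}$ is automatic since the defining bounds hold pointwise a.e.\ in $t$.

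For the cheap control inequality \eqref{cheap}, my plan is to upper-bound the optimal cost on $[a,T]$ by the cost of an explicit admissible feedback that drives the particles exponentially to $\bar{x}$, and then close the loop via strict dissipativity. I would take the linear feedback $\tilde u(t,x) = -\lambda(x-\bar{x})$ with $\lambda > 2C_P$ and $\lambda(|\bar{x}|+1) \leq C_B$; both can be met once $C_B$ is large enough relative to $C_P$ and $|\bar{x}|$, and then $\tilde u \in \mathcal{F}$. Writing $y_i = x_i - \bar{x}$, the closed-loop system reads $\dot y_i = \frac{1}{N}\sum_j P(y_i - y_j) - \lambda y_i$. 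For the Lyapunov functional $V(t) = \frac{1}{N}\sum_i |y_i(t)|^2$, a direct computation yields
\[
\dot V = \frac{2}{N^2}\sum_{i,j} y_i \cdot P(y_i - y_j) - 2\lambda V,
\]
and the Lipschitz bound \eqref{Lip-P} combined with AM--GM bounds the first term by $4C_P V$. Hence $\dot V \leq -\alpha V$ with $\alpha = 2\lambda - 4C_P > 0$, so $V(t)\leq e^{-\alpha(t-a)}V(a)$ on $[a,T]$.

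Next I would apply Assumption \ref{ass1}(ii) to get $L(x_i) \leq C_L |y_i|^2$ and $\Psi(-\lambda y_i) \leq C_\Psi \lambda^2 |y_i|^2$, hence $f(\tilde\mu_N, \tilde u) \leq (C_L + C_\Psi\lambda^2) V(t)$. Integrating over $[a,T]$ and using the exponential decay of $V$ gives
\[
\int_a^T f(\tilde\mu_N,\tilde u)\,dt \;\leq\; \frac{C_L + C_\Psi\lambda^2}{\alpha}\,V(a).
\]
The optimality established in the first part yields $\int_a^T f(\mu_N,u_N)\,dt \leq \int_a^T f(\tilde\mu_N,\tilde u)\,dt$, while strict dissipativity from Assumption \ref{ass1}(i) shows that the left-hand side of \eqref{cheap} is at most $\frac{1}{C_D}\int_a^T f(\mu_N,u_N)\,dt$. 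Chaining these estimates gives \eqref{cheap} with $C_0 = (C_L + C_\Psi\lambda^2)/(\alpha C_D)$, which is manifestly independent of $N$.

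The main technical point I expect to address is ensuring that the closed-loop trajectory $x_i(t)$ and the feedback values $\tilde u(t,x_i(t)) = -\lambda y_i(t)$ remain in the balls $B(\bar{x},R)$ and $B(0,R)$ where Assumption \ref{ass1}(ii) applies. The monotone decay of $V$ controls this in the averaged sense $\frac{1}{N}\sum_i |y_i(t)|^2 \leq V(a)$, and combined with the compact support of $\mu_0$ and $R$ chosen sufficiently large the pointwise inclusions hold; alternatively, a global version of (ii) removes the check entirely and the estimate closes directly.
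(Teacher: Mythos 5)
Your proposal is correct and follows the same overall strategy as the paper: the dynamic programming principle by the standard concatenation/contradiction argument, then an explicit stabilizing feedback to bound the optimal cost from above, closed by strict dissipativity from below. The one genuine difference is the feedback itself. The paper takes $\tilde u_N(t,\tilde x_i)=-\beta(\tilde x_i-\bar x)-\frac{1}{N}\sum_j P(\tilde x_i-\tilde x_j)$, which cancels the interaction term exactly, so each particle obeys $\dot{\tilde y}_i=-\beta\tilde y_i$ and decays pointwise with an explicit rate for any $\beta>0$; the price is that the control cost then contains the interaction sum, which must be estimated via Jensen's inequality and the Lipschitz bound on $P$. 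You instead keep the plain linear feedback $-\lambda(x-\bar x)$, pay for the uncancelled interaction with a Lyapunov/Gronwall estimate on $V=\frac1N\sum_i|y_i|^2$ (your computation $\dot V\le -(2\lambda-4C_P)V$ checks out), and in exchange get a trivial control-cost bound $\Psi(-\lambda y_i)\le C_\Psi\lambda^2|y_i|^2$. Your route requires $\lambda>2C_P$, whereas the paper's rate parameter is free, and your decay is only for the average $V$ rather than particle-wise; neither restriction matters for \eqref{cheap1}, and both constants are independent of $N$. The two caveats you flag --- that $\mathcal F$ must be rich enough to contain the feedback (i.e.\ $C_B$ large relative to $C_P$ and $|\bar x|$) and that Assumption \ref{ass1}(ii) is only stated on balls $B(\bar x,R)$ --- apply equally to the paper's own construction, which asserts $\tilde u_N\in\mathcal F$ after checking only the Lipschitz constant and applies (ii) without verifying the containment, so they do not constitute a gap relative to the paper's argument.
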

\begin{proof}
    Suppose there exists a control $\tilde{u}_N$, defined on $t\in[a,T]$, such that the corresponding solution $\tilde{x}(t)$ satisfies $\tilde{x}(a)=x(a)$ and 
    $$
    \int_a^T f(\tilde{\mu}_{N},\tilde{u}_N) dt < \int_a^T f_N(\mu_N,u_N) dt.
    $$
    Here $\tilde{\mu}_{N}$ is the empirical measure given by
    $$
\tilde{\mu}_{N} = \frac{1}{N}\sum_{i=1}^N \delta\left(x-\tilde{x}_i(t)\right).
$$
Then, we  construct a control 
$$
\hat{u}_N(t,x) = 
\left\{
\begin{array}{ll}
    u_N(t,x), &  \qquad t\in[0,a) \\[2mm]
    \tilde{u}_N(t,x), &  \qquad t\in[a,T].
\end{array}
\right.
$$
In this case, the cost satisfies 
    \begin{align*}
        \int_0^T f(\hat{\mu}_{N},\hat{u}_N) dt = \int_0^a f(\mu_{N},u_N) dt + \int_a^T f(\tilde{\mu}_{N},\tilde{u}_N) dt 
        <  \int_0^T f(\mu_{N},u_N) dt .
    \end{align*}
    This contradicts to the fact that $(x(t),u_N(t))$ is an optimal solution on $[0,T]$. Therefore, $u_N|_{t\in[a,T]}$ is an optimal control for the sub-problem $\mathcal{Q}_N(a,T,x(a))$.

Thanks to the strict dissipativity, we have
\begin{align*}
        \int_a^T f(\mu_N,u_N) dt \geq&~ C_D 
        \int_a^T \int_{\mathbb{R}^d}\Big(|x-\bar{x}|^2 +|u_N(t,x)|^2 \Big) d\mu_N(t,x)dt \\[2mm]
        =&~C_D \frac{1}{N}\sum_{i=1}^N\int_a^T |x_i(t)-\bar{x}|^2 + |u_N(t,x_i(t))|^2 dt.
\end{align*}
By Remark \ref{remark21}, we obtain  the estimate \eqref{cheap}  once we prove the following cheap control inequality:
\begin{equation}\label{cheap1}
    \int_a^T f(\mu_N,u_N) dt = \frac{1}{N}\sum_{i=1}^N\int_{a}^T L(x_i(t)) + \Psi(u_N(t,x_i(t))) dt \leq \tilde{C}_0 \frac{1}{N}\sum_{i=1}^N |x_i(a)-\bar{x}|^2
\end{equation}
for a constant $\tilde{C}_0>0$ independent of $N$.

Next, we focus on the proof of \eqref{cheap1}. To this end, we consider the feedback control for the problem \eqref{particle-pb}:
$$
\tilde{u}_N(t,\tilde{x}_i(t))=-\beta(\tilde{x}_i(t)-\bar{x})-\frac{1}{N}\sum_{j=1}^NP(\tilde{x}_i(t)-\tilde{x}_j(t)),\qquad i=1,2,,...,N,\quad t\in[a,T].
$$
Note that  $\tilde{u}_N\in\mathcal{F}$ holds. Indeed, due to assumption \eqref{Lip-P}, we have that 
\begin{align*}
|\tilde{u}_N(t,x)-\tilde{u}_N(t,y)|
=&~ \Big|\beta(x-y)+\frac{1}{N}\sum_{j=1}^N\big[ P(x-\tilde{x}_j(t)) -P(y-\tilde{x}_j(t)) \big]\Big|\\[2mm]
\leq&~ \beta|x-y| + C_P \frac{1}{N} \sum_{j=1}^N |x-y| = (\beta + C_P)|x-y|,
\end{align*}
which gives a Lipschitz constant for $\tilde{u}_N(t,\cdot)$. 
Based on this feedback control, $\tilde{x}_i(t)$ satisfies the equation
$$
\frac{d\tilde{x}_i(t)}{dt} = -\beta (\tilde{x}_i(t)-\bar{x}),\qquad \tilde{x}_i(a) = x_i(a). 
$$
It follows that 
\begin{equation}\label{com1}
|\tilde{x}_i(t)-\bar{x}|^2 = e^{-2\beta (t-a)}|\tilde{x}_i(a)-\bar{x}|^2 = e^{-2\beta (t-a)}|x_i(a)-\bar{x}|^2.
\end{equation}
In the next paragraph, we estimate $|\tilde{u}_N(t,\tilde{x}_i(t))|^2$. By definition, we have
$$
|\tilde{u}_N(t,\tilde{x}_i(t))|^2 \leq 2\beta^2 |\tilde{x}_i(t)-\bar{x}|^2 + 2\Big|\frac{1}{N}\sum_{j=1}^NP(\tilde{x}_i(t)-\tilde{x}_j(t))\Big|^2.
$$
Using Jensen's inequality,
we have
\begin{eqnarray}\label{cauchy}
\Big|\frac{1}{N}\sum_{j=1}^NP(\tilde{x}_i(t)-\tilde{x}_j(t))\Big|^2 
\leq \frac{1}{N}\sum_{j=1}^N\Big|P(\tilde{x}_i(t)-\tilde{x}_j(t))\Big|^2.    
\end{eqnarray}
Due to the assumption of 
$P(x)$, we have
\begin{align*}
\frac{1}{N}\sum_{j=1}^N\Big|P(\tilde{x}_i(t)-\tilde{x}_j(t))\Big|^2
\leq&~\frac{C_P^2}{N}\sum_{j=1}^N|\tilde{x}_i(t)-\tilde{x}_j(t)|^2\\[2mm]
\leq&~2C_P^2|\tilde{x}_i(t)-\bar{x}|^2+
\frac{2C_P^2}{N}\sum_{j=1}^N|\tilde{x}_j(t)-\bar{x}|^2.
\end{align*}
Then, it follows that
\begin{align*}
    |\tilde{u}_N(t,\tilde{x}_i(t))|^2 \leq &~(2\beta^2+4C_P^2)|\tilde{x}_k(t)-\bar{x}|^2 + \frac{4C_P^2}{N}\sum_{j=1}^N|\tilde{x}_j(t)-\bar{x}|^2.
\end{align*}    
We sum $i$ from $1$ to $N$ and get 
\begin{align*}
   \frac{1}{N}\sum_{i=1}^N|\tilde{u}_N(t,\tilde{x}_i(t))|^2 \leq & ~C(\beta,C_P) \frac{1}{N}\sum_{i=1}^N |\tilde{x}_i(t)-\bar{x}|^2
\end{align*}
with $C(\beta,C_P)=2\beta^2+8C_P^2$. Since $u_N$ is optimal in \eqref{particle-pb}, we have 
\begin{align*}
    \frac{1}{N}\sum_{i=1}^N\int_{a}^T L(x_i(t)) + \Psi(u_N(t,x_i(t))) dt 
\leq &~
\frac{1}{N}\sum_{i=1}^N\int_a^T L(\tilde{x}_i(t))+\Psi(\tilde{u}_N(t,\tilde{x}_i(t)))dt \\[2mm]
\leq&~ (C({\beta},C_P)C_{\Psi}+C_L)\frac{1}{N}\sum_{i=1}^N \int_a^T |\tilde{x}_i(t)-\bar{x}|^2 dt.
\end{align*}
Note that the last inequality is due to Assumption~\ref{ass1} (ii).
Substituting \eqref{com1} into the last inequality, we have
\begin{align*}
    &~\frac{1}{N}\sum_{i=1}^N\int_{a}^T L(x_i(t)) + \Psi(u_N(t,x_i(t))) dt \\[2mm]
\leq &~(C({\beta},C_P)C_{\Psi}+C_L)\left(\int_a^Te^{-2\beta(t-a)}dt\right)\frac{1}{N}\sum_{i=1}^N |x_i(a)-\bar{x}|^2.
\end{align*}
It is easy to show that
$$
\int_a^Te^{-2\beta(t-a)}dt=\frac{1}{2\beta}e^{-2\beta(t-a)}\Big|_T^a\leq \frac{1}{2 \beta}.
$$ 
Then, we conclude  
\begin{align}\label{cheap-step1}
\frac{1}{N}\sum_{i=1}^N\int_{a}^T L(x_i(t)) + \Psi(u_N(t,x_i(t))) dt
\leq \tilde{C}_0\frac{1}{N}\sum_{i=1}^N |x_i(a)-\bar{x}|^2.
\end{align}
Note that the constant $\tilde{C}_0 = [C({\beta},C_P)C_{\Psi}+C_L]/(2\beta)$ is independent of $N$. 
\end{proof}

The estimate \eqref{cheap} is independent of $N$. We consider $N\rightarrow \infty$ to get the corresponding result for the mean-field problem. To this end, we also need to use the lower semi-continuity of the cost function \eqref{mec}. Namely, we prove the following property for the mean-field problem. 

\begin{lemma}
Suppose $(\mu(t,x),u(t,x))$ is the  solution to the optimal control problem \eqref{mec}-\eqref{meq}, then the following inequality holds under Assumption \ref{ass1}:
\begin{align}
\int_a^T \int_{\mathbb{R}^d}\Big(|x-\bar{x}|^2 +|u(t,x)|^2 \Big) d\mu(t,x)dt 
\leq &~ C_0 \int |x-\bar{x}|^2 d\mu(a,x).\label{cheap-w} 
\end{align}    
\end{lemma}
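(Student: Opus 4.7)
The plan is to pass from the particle-level cheap control estimate \eqref{cheap} (equivalently, the cost bound \eqref{cheap-step1}) to the mean-field problem by combining the strict dissipativity on the limit pair $(\mu,u)$ with the lower semi-continuity of the cost $f$ provided by Theorem \ref{thm21}. The key observation is that strict dissipativity is applied in the ``wrong'' direction for the limit measure itself, so rather than trying to pass to the limit directly on the quantity $\int |u_N|^2 d\mu_N$ — which is awkward — we route the argument through $f$.

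More precisely, I would first rewrite the particle-level inequality \eqref{cheap-step1} as
\begin{equation*}
\int_a^T f(\mu_{N_k}(t,x),u_{N_k}(t,x))\,dt \leq \tilde C_0 \int_{\mathbb{R}^d}|x-\bar x|^2\,d\mu_{N_k}(a,x),
\end{equation*}
using the empirical-measure identity from Remark \ref{remark21}. Applying the lower semi-continuity bound from Theorem \ref{thm21} to the left-hand side and passing to the $\liminf$ on the right, I obtain
\begin{equation*}
\int_a^T f(\mu(t,x),u(t,x))\,dt \leq \tilde C_0\,\liminf_{k\to\infty}\int_{\mathbb{R}^d}|x-\bar x|^2\,d\mu_{N_k}(a,x).
\end{equation*}

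For the right-hand side, I invoke the equi-compact support of $\mu_{N_k}(t,\cdot)$ and $\mu(t,\cdot)$ in $B(0,R)$ guaranteed by Theorem \ref{thm21}, so that the continuous test function $x\mapsto|x-\bar x|^2$ is bounded on a common compact set; together with $\mathcal{W}_1$ (or $\mathcal{W}_2$) convergence of $\mu_{N_k}(a,\cdot)$ to $\mu(a,\cdot)$ this yields
\begin{equation*}
\lim_{k\to\infty}\int_{\mathbb{R}^d}|x-\bar x|^2\,d\mu_{N_k}(a,x) = \int_{\mathbb{R}^d}|x-\bar x|^2\,d\mu(a,x).
\end{equation*}
Finally, applying Assumption \ref{ass1}(i) (strict dissipativity) with the pair $(\mu,u)$ itself on $[a,T]$ bounds the time integral of $|x-\bar x|^2+|u|^2$ against $\mu$ by $C_D^{-1}\int_a^T f(\mu,u)\,dt$, and chaining the three inequalities gives \eqref{cheap-w} with $C_0 = \tilde C_0/C_D$.

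The main subtlety is that one cannot hope to pass to the limit directly on $\int|u_{N_k}|^2\,d\mu_{N_k}$ because the product of two weakly converging objects need not converge, and $u_{N_k}$ is only controlled in $\mathcal{F}$; using $f$ as an intermediary is what makes the argument work, since lower semi-continuity is built into Theorem \ref{thm21}. The compact-support property is what turns $\mathcal{W}_1$ convergence into convergence of second-moment integrals, which would otherwise require additional uniform integrability.
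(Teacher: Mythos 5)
Your proposal is correct and follows essentially the same route as the paper's proof: lower semi-continuity of $f$ from Theorem \ref{thm21}, the particle-level bound \eqref{cheap1}, convergence of the second moment at time $a$, and strict dissipativity applied to the limit pair, yielding $C_0=\tilde{C}_0/C_D$. Your justification of the limit $\int|x-\bar{x}|^2\,d\mu_{N_k}(a,\cdot)\to\int|x-\bar{x}|^2\,d\mu(a,\cdot)$ via equi-compact support is a detail the paper leaves implicit, but it is the same argument.
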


\begin{proof}
Due to lower semi-continuity, we have 
\begin{align*}
\int_a^T f(\mu(t,x),u(t,x))dt \leq&~ \liminf_{k\rightarrow \infty}\int_a^T f(\mu_{N_k}(t,x),u_{N_k}(t,x))dt \\[2mm] 
= &~ \liminf_{k\rightarrow \infty}\frac{1}{N_k}\sum_{i=1}^{N_k}\int_a^T L(x_i(t))+\Psi(u_{N_k}(t,x_i(t)))dt. 
\end{align*}
On the other hand, since $u_{N_k}$ is the optimal solution to \eqref{particle-pb}, it follows from \eqref{cheap1} that
\begin{align*}
\int_a^T f(\mu(t,x),u(t,x))dt \leq &~\liminf_{k\rightarrow \infty}\frac{1}{N_k}\sum_{i=1}^{N_k}\int_a^T L(x_i(t))+\Psi(u_{N_k}(t,x_i(t)))dt\\[2mm]
\leq&~ \liminf_{k\rightarrow \infty} \tilde{C}_0\frac{1}{N_k}\sum_{i=1}^{N_k} |x_i(a)-\bar{x}|^2 \\[2mm]
=&~ \tilde{C}_0 \int |x-\bar{x}|^2 d\mu(a,x).
\end{align*}
Here, $\tilde{C}_0$ is the constant introduced in Lemma \ref{lemma11}. 
Using the strict dissipativity shows that
\begin{align*}
C_D 
\int_a^T \int_{\mathbb{R}^d}\Big(|x-\bar{x}|^2 +|u(t,x)|^2 \Big) d\mu(t,x)dt 
\leq \int_a^T f(\mu(t,x),u(t,x))dt 
\leq \tilde{C}_0 \int |x-\bar{x}|^2 d\mu(a,x).
\end{align*}
Thus we can take $C_0=\tilde{C}_0/C_D$ to conclude the result.
\end{proof} 

We conclude this section with the following remarks: 
\begin{itemize}
    \item The inequality \eqref{cheap-w} is the mean-field limit of relation \eqref{cheap}.
    \item The right-hand side of \eqref{cheap-w} is independent of $T$. As in other turnpike results, this shows an integral turnpike property. Namely, the second order moments $\int_{\mathbb{R}^d}\Big(|x-\bar{x}|^2 +|u(t,x)|^2 \Big) d\mu(t,x)$ must be small along the largest part of the time-horizon provided that $T$ is sufficiently large. 
    \item  The cheap control idea was also used in \cite{main} to prove the integral turnpike property with interior decay. Different from the results in  \cite{main}, the present work uses the second-order moment $\int_{\mathbb{R}^d} |x-\bar{x}|^2 d\mu(a,x)$ as the bound in \eqref{cheap-w} instead of the first-order moment. This is important for the proofs in the next section.
\end{itemize}
 
\section{Exponential turnpike property}\label{Section 4}

In this section, we will prove that the optimal solution to \eqref{mec}-\eqref{meq} converges to the optimal static state exponentially fast. The estimates on the inequalities for the optimal solution $\mu(t,x)$ and the optimal control $u(t,x)$ are given separately, see Theorem \ref{theorem-m-s} and \ref{thm14-c} below.  
To this end, we derive the estimate for the optimal solution $x_i(t)$  of the $N$-particles system. Then, we consider the mean-field limit $N \rightarrow \infty$ to obtain an estimate for $\mu(t,x)$. At last, we prove that the optimal control $u(t,x)$ can be bounded in terms of the solution $\mu(t,x)$.

\subsection{Estimate for the solution}\label{Section 4-1}

For the solution $x_i(t)$ of \eqref{particle-pb}, we use Gronwall's inequality to derive 

\begin{lemma}\label{lemma12}
Suppose \eqref{cheap} holds, there exists a constant $C_1\geq 1$ such that
\begin{equation}
    \frac{1}{N} \sum_{i=1}^N |x_i(t_2)-\bar{x}|^2 
\leq C_1 \frac{1}{N}\sum_{i=1}^N |x_i(t_1)-\bar{x}|^2,\qquad \forall~0 \leq t_1 \leq t_2 \leq T.
\end{equation}
\end{lemma}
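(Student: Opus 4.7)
The plan is to reduce the claim to a Gronwall argument for the empirical second moment
$Y(t) := \frac{1}{N}\sum_{i=1}^N |x_i(t)-\bar{x}|^2$,
treating the control contribution as a forcing term whose time integral is already controlled by \eqref{cheap}. Setting $U(t) := \frac{1}{N}\sum_{i=1}^N |u_N(t,x_i(t))|^2$, the hypothesis \eqref{cheap} gives in particular
\begin{equation*}
\int_{t_1}^{t_2} U(s)\,ds \;\leq\; \int_{t_1}^{T} U(s)\,ds \;\leq\; C_0\, Y(t_1)
\end{equation*}
for every $0\leq t_1\leq t_2\leq T$, and this is the single ingredient from Section~\ref{Section2.1} that I would plug in.

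The first step is to differentiate $|x_i(t)-\bar x|^2$ along the ODEs in \eqref{particle-pb} and split the two resulting cross-terms with Young's inequality $2a\cdot b\leq |a|^2+|b|^2$. The interaction cross-term is handled exactly as in \eqref{cauchy}: Jensen's inequality together with $P(0)=0$ and the Lipschitz bound \eqref{Lip-P} yields
\begin{equation*}
\Bigl|\tfrac{1}{N}\sum_{j=1}^N P(x_i-x_j)\Bigr|^2 \;\leq\; 2C_P^2\,|x_i-\bar x|^2 + 2C_P^2\, Y(t).
\end{equation*}
Averaging the resulting per-particle inequality over $i=1,\dots,N$ then produces a differential inequality of the schematic form $Y'(t) \leq A\, Y(t) + U(t)$ on $[0,T]$, with a constant $A$ depending only on $C_P$.

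The second step is Gronwall's inequality on $[t_1,t_2]$, which gives
$Y(t_2) \leq e^{A(t_2-t_1)}\bigl(Y(t_1) + \int_{t_1}^{t_2} U(s)\,ds\bigr)$.
Plugging in the cheap-control bound above and using $t_2-t_1\leq T$ yields
$Y(t_2) \leq e^{AT}(1+C_0)\, Y(t_1)$,
so choosing $C_1 := e^{AT}(1+C_0)\geq 1$ closes the proof. Since $A$ depends only on $C_P$ and $C_0$ is the constant from Lemma~\ref{lemma11} (also $N$-independent), the resulting $C_1$ is likewise independent of $N$, which is what will be needed when we pass to the mean-field limit.

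There is no genuine obstacle here; the only care required is to ensure that after summing the per-particle estimates, the double sum coming from the interaction kernel collapses to a clean multiple of $Y(t)$ rather than a particle-wise quantity such as $\max_i|x_i-\bar x|^2$. This is exactly what Jensen's inequality combined with the Lipschitz property of $P$ delivers, and it is also the one place where the assumption $P(0)=0$ is genuinely used.
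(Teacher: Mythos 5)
Your differential inequality $Y'(t)\le A\,Y(t)+U(t)$ and your treatment of the interaction term via Jensen and the Lipschitz bound on $P$ are both fine, but the Gronwall step is where the argument fails for the purposes of this paper: it produces $C_1=e^{AT}(1+C_0)$, which grows exponentially in the time horizon $T$. The lemma as literally stated is then true for each fixed $T$, but this constant is fed into Lemma \ref{lemma13} and Theorem \ref{thm14}, where one must fix $\tau>C_0C_1$ and obtains the decay rate $\alpha=\frac{1}{\tau}\log\bigl(\tau/(C_0C_1)\bigr)$. With $C_1\sim e^{AT}$ the admissible $\tau$, and hence $1/\alpha$, blow up as $T\to\infty$, so the exponential turnpike property --- whose entire content is uniformity of $C_2$ and $\alpha$ in $T$ --- is lost. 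You were careful to check $N$-independence of $C_1$ but overlooked $T$-independence, which is equally essential here.

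The missing observation is that the cheap-control inequality \eqref{cheap} bounds not only $\int_{t_1}^{t_2}U(s)\,ds$ but also the state integral, since its left-hand side contains the terms $|x_i(t)-\bar x|^2$ as well: taking $a=t_1$ gives $\int_{t_1}^{t_2}Y(s)\,ds\le C_0\,Y(t_1)$. The paper therefore does not invoke Gronwall at all. It integrates the per-particle identity over $[t_1,t_2]$, averages in $i$, and arrives at
\begin{equation*}
Y(t_2)\;\le\; Y(t_1)\;+\;(1+4C_P)\int_{t_1}^{t_2}Y(s)\,ds\;+\;\int_{t_1}^{t_2}U(s)\,ds,
\end{equation*}
then bounds \emph{both} integrals by $C_0\,Y(t_1)$ via \eqref{cheap}, yielding $C_1=(2+4C_P)C_0+1$, independent of both $N$ and $T$. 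The fix to your argument is one line: stop before applying Gronwall and use \eqref{cheap} on the $\int Y$ term as well as on the $\int U$ term.
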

\begin{proof}
We estimate $y_i(t)=x_i(t)-\bar{x}$ by computing:
\begin{align}
    \frac{1}{2}\int_{t_1}^{t_2}\frac{d}{dt}\langle y_i(t), y_i(t) \rangle dt=&~\int_{t_1}^{t_2}\langle y_i(t),y_i'(t) \rangle dt \nonumber
    \\[2mm]
    = &~ \frac{1}{N}\sum_{j=1}^N \int_{t_1}^{t_2} \langle y_i(t), P(y_i(t)-y_j(t)) \rangle dt + \int_{t_1}^{t_2} \langle y_i(t), u_i(t) \rangle dt. \label{thm12eq1}
\end{align}
For the second term, we have
\begin{eqnarray}\label{thm12eq2}
\int_{t_1}^{t_2} \langle y_i(t), u_i(t) \rangle dt \leq \frac{1}{2} \int_{t_1}^{t_2} |u_i(t)|^2 dt + \frac{1}{2} \int_{t_1}^{t_2} |y_i(t)|^2 dt,    
\end{eqnarray}
and for the first term, we have
\begin{align}
    &\frac{1}{N}\sum_{j=1}^N \int_{t_1}^{t_2} \langle y_i(t), P(y_i(t)-y_j(t)) \rangle dt \leq ~ \frac{1}{N}\sum_{j=1}^N C_P \int_{t_1}^{t_2} | y_i(t)||y_i(t)-y_j(t)| dt \nonumber \\[2mm]
    \leq ~& \frac{1}{N}\sum_{j=1}^N C_P \int_{t_1}^{t_2} | y_i(t)|^2+|y_i(t)||y_j(t)| dt  
    \leq ~ \frac{3C_P}{2} \int_{t_1}^{t_2} | y_i(t)|^2dt + \frac{C_P}{2N}\sum_{j=1}^N  \int_{t_1}^{t_2} |y_j(t)|^2 dt.\label{thm12eq3}
\end{align}
Combining \eqref{thm12eq1}---\eqref{thm12eq3} yields
$$
\frac{1}{2}\int_{t_1}^{t_2}\frac{d}{dt}\langle y_i(t), y_i(t) \rangle dt \leq \Big(\frac{1}{2}+\frac{3C_P}{2}\Big) \int_{t_1}^{t_2} |y_i(t)|^2 dt 
+ \frac{C_P}{2N}\sum_{j=1}^N  \int_{t_1}^{t_2} |y_j(t)|^2 dt
+ \frac{1}{2}\int_{t_1}^{t_2} |u_i(t)|^2 dt.
$$
We sum $i$ from $1$ to $N$ and multiply $1/N$ to obtain
$$
\frac{1}{N}\sum_{i=1}^N|y_i(t_2)|^2 \leq \frac{1}{N}\sum_{i=1}^N|y_i(t_1)|^2 + \left(1+4C_P\right) \frac{1}{N} \sum_{i=1}^N \int_{t_1}^{t_2} |y_i(t)|^2 dt + \frac{1}{N} \sum_{i=1}^N \int_{t_1}^{t_2} |u_i(t)|^2 dt.
$$
Combining this with \eqref{cheap}, we obtain
\begin{equation*}
    \frac{1}{N} \sum_{i=1}^N |x_i(t_2)-\bar{x}|^2 
\leq C_1 \frac{1}{N}\sum_{i=1}^N |x_i(t_1)-\bar{x}|^2,\qquad \forall~0 \leq t_1 \leq t_2 \leq T.
\end{equation*}
with 
$C_1= (2+4C_P)C_0+1$.
\end{proof}

Combining this lemma with the inequality \eqref{cheap}, we prove:

\begin{lemma}\label{lemma13}
Under Assumption \ref{ass1}, the following inequality holds for any $t \in [n\tau,T]$ with a given constant $\tau>0$ and an integer $1\leq n\leq \frac{T}{\tau}$:
$$
\frac{1}{N}\sum_{i=1}^N|x_i(t)-\bar{x}|^2 \leq \left(\frac{C_0C_1}{\tau}\right)^n \frac{1}{N}\sum_{i=1}^N|x_i(0)-\bar{x}|^2.
$$ 
\end{lemma}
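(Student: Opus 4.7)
The plan is to set $V(t) := \frac{1}{N}\sum_{i=1}^N |x_i(t)-\bar{x}|^2$ and prove the bound $V(t) \leq (C_0 C_1/\tau)^n V(0)$ for $t \in [n\tau, T]$ by induction on $n$, combining the cheap control integral bound from Lemma \ref{lemma11} (with the integration starting point shifted by $\tau$ at each step) with the pointwise growth bound from Lemma \ref{lemma12}. The two available facts can be rephrased compactly as: (a) $\int_a^T V(t)\,dt \leq C_0 V(a)$ for every $a \in [0,T]$, since Lemma \ref{lemma11} tells us $u_N$ restricted to $[a,T]$ is again optimal, and (b) $V(t_2) \leq C_1 V(t_1)$ whenever $0 \leq t_1 \leq t_2 \leq T$.

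For the base case $n=1$, I would apply (a) with $a=0$ to obtain $\int_0^\tau V(t)\,dt \leq \int_0^T V(t)\,dt \leq C_0 V(0)$. Since $V$ is continuous (the particle trajectories are $C^1$), the mean value theorem for integrals produces some $t_1^{\ast} \in [0,\tau]$ with $V(t_1^{\ast}) \leq C_0 V(0)/\tau$. Then for any $t \in [\tau, T]$ we have $t \geq t_1^{\ast}$, so (b) yields $V(t) \leq C_1 V(t_1^{\ast}) \leq (C_0 C_1/\tau) V(0)$, which is the claim at $n=1$.

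For the inductive step, assume the estimate holds at level $n$, so in particular $V(n\tau) \leq (C_0 C_1/\tau)^n V(0)$. Apply (a) with $a = n\tau$ and restrict to the subinterval $[n\tau,(n+1)\tau]$ (valid as long as $(n+1)\tau \leq T$) to get
\begin{equation*}
\int_{n\tau}^{(n+1)\tau} V(t)\,dt \leq C_0\, V(n\tau) \leq C_0 \left(\frac{C_0 C_1}{\tau}\right)^n V(0).
\end{equation*}
The mean value theorem then gives a point $t_{n+1}^{\ast}\in [n\tau,(n+1)\tau]$ with $V(t_{n+1}^{\ast}) \leq (C_0/\tau)(C_0 C_1/\tau)^n V(0)$, and applying (b) with $t_1=t_{n+1}^{\ast}$ and $t_2 = t$ for any $t \in [(n+1)\tau, T]$ upgrades this to the desired $V(t) \leq (C_0 C_1/\tau)^{n+1} V(0)$.

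The bulk of the argument is essentially bookkeeping, so no step is genuinely hard; the only thing to be slightly careful about is the existence of the intermediate point $t_k^{\ast}$, which relies on continuity of $V$ (immediate from the ODE in \eqref{particle-pb} and the fact that $u_N \in \mathcal{F}$). One should also note that the constants $C_0$ and $C_1$ are independent of $N$, so the resulting bound is $N$-uniform, which is important for passing to the mean-field limit in the subsequent subsection.
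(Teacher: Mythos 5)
Your proposal is correct and follows essentially the same route as the paper: the mean value theorem (or averaging argument) applied to the cheap control bound on $[n\tau,(n+1)\tau]$ to extract an intermediate point, followed by the pointwise growth bound of Lemma \ref{lemma12} to propagate the estimate to all $t\geq (n+1)\tau$, exactly as in the paper's induction. The additional remarks on continuity of $V$ and $N$-uniformity of the constants are consistent with the paper's setup.
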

\begin{proof}
    We first prove the case  $n=1$. There exists a point $t_1\in[0,\tau]$ such that
    \begin{align*}
    \frac{1}{N}\sum_{i=1}^N|x_i(t_1)-\bar{x}|^2 \leq \frac{1}{\tau} \int_{0}^{\tau}
    \frac{1}{N}\sum_{i=1}^N|x_i(t)-\bar{x}|^2 dt
    \leq \frac{C_0}{\tau} \frac{1}{N}\sum_{i=1}^N|x_i(0)-\bar{x}|^2.
    \end{align*}
    Note that the last inequality follows by \eqref{cheap}. For any $t\geq \tau \geq t_1$, we obtain by Lemma \ref{lemma12}
    \begin{align*}
    \frac{1}{N}\sum_{i=1}^N|x_i(t)-\bar{x}|^2
    \leq C_1\frac{1}{N}\sum_{i=1}^N|x_i(t_1)-\bar{x}|^2
    \leq \frac{C_0C_1}{\tau} \frac{1}{N}\sum_{i=1}^N|x_i(0)-\bar{x}|^2.
    \end{align*}
    
    Then we suppose the inequality holds for $n\geq 1$ and prove the result for $n+1$. There exists $t_n\in[n\tau,(n+1)\tau]$ such that
\begin{align*}
    \frac{1}{N}\sum_{i=1}^N|x_i(t_n)-\bar{x}|^2 \leq&~ \frac{1}{\tau} \int_{n\tau}^{(n+1)\tau}\frac{1}{N}\sum_{i=1}^N|x_i(t)-\bar{x}|^2dt\\[2mm] 
    \leq&~\frac{C_0}{\tau} \frac{1}{N}\sum_{i=1}^N|x_i(n\tau)-\bar{x}|^2\leq \frac{C_0}{\tau} \left(\frac{C_0C_1}{\tau}\right)^n \frac{1}{N}\sum_{i=1}^N|x_i(0)-\bar{x}|^2.
    \end{align*}
Thus for any $t\in[(n+1)\tau,T]$, we obtain by Lemma \ref{lemma12}
$$
\frac{1}{N}\sum_{i=1}^N|x_i(t)-\bar{x}|^2 \leq C_1\frac{1}{N}\sum_{i=1}^N|x_i(t_n)-\bar{x}|^2\leq \left(\frac{C_0C_1}{\tau}\right)^{n+1} \frac{1}{N}\sum_{i=1}^N|x_i(0)-\bar{x}|^2
$$ 
and this completes the proof.
\end{proof}

Thanks to the above lemmas, we are in the position to state the main result for the optimal solution $x_i(t)$ of the particle system \eqref{particle-pb}:

\begin{theorem}\label{thm14}
    Suppose Assumption \ref{ass1} holds. Then there exist constants $C_2>0$ and $\alpha>0$ such that the optimal solution for $\mathcal{Q}_N(0,T,x_0)$ satisfies the exponential turnpike property:
    $$
    \frac{1}{N}\sum_{i=1}^N|x_i(t)-\bar{x}|^2 \leq C_2 e^{-\alpha t} \frac{1}{N}\sum_{i=1}^N|x_i(0)-\bar{x}|^2
    $$
    for any $t\in(0,T)$.
\end{theorem}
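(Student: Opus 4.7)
The plan is to convert the discrete geometric-decay estimate of Lemma \ref{lemma13} into a continuous exponential bound by choosing the free parameter $\tau$ appropriately and then applying the estimate at $n = \lfloor t/\tau \rfloor$.

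First, I would pick $\tau$ large enough so that the base of the geometric decay satisfies $C_0 C_1/\tau < 1$. A concrete choice is $\tau := 2C_0 C_1$, which gives $C_0 C_1/\tau = 1/2$; any $\tau > C_0 C_1$ works and only affects the final constants. Note that $C_0$ and $C_1$ are the constants from Lemma \ref{lemma11} and Lemma \ref{lemma12}, both independent of $N$, so the resulting $\tau$ is likewise independent of $N$.

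Next, for any $t \in [\tau,T]$ set $n := \lfloor t/\tau \rfloor \geq 1$, so that $t \in [n\tau, (n+1)\tau)\cap[n\tau,T]$ and Lemma \ref{lemma13} applies at this $n$, yielding
\begin{equation*}
\frac{1}{N}\sum_{i=1}^N |x_i(t)-\bar{x}|^2 \leq \Bigl(\tfrac{1}{2}\Bigr)^{n} \frac{1}{N}\sum_{i=1}^N |x_i(0)-\bar{x}|^2.
\end{equation*}
Using $n > t/\tau - 1$, this converts to
\begin{equation*}
\Bigl(\tfrac{1}{2}\Bigr)^{n} \leq 2\cdot \Bigl(\tfrac{1}{2}\Bigr)^{t/\tau} = 2\, e^{-\alpha t},\qquad \alpha := \frac{\ln 2}{\tau} = \frac{\ln 2}{2C_0C_1}.
\end{equation*}
This gives the desired exponential bound with the tentative constant $C_2 = 2$ on the range $t \in [\tau, T]$.

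Finally, I would handle the short interval $t \in (0,\tau)$ directly by Lemma \ref{lemma12} with $t_1 = 0$, $t_2 = t$:
\begin{equation*}
\frac{1}{N}\sum_{i=1}^N |x_i(t)-\bar{x}|^2 \leq C_1 \frac{1}{N}\sum_{i=1}^N |x_i(0)-\bar{x}|^2 \leq C_1 e^{\alpha\tau}\, e^{-\alpha t}\frac{1}{N}\sum_{i=1}^N |x_i(0)-\bar{x}|^2.
\end{equation*}
Taking $C_2 := \max\{2,\, C_1 e^{\alpha\tau}\} = \max\{2, 2C_1\}$ then gives the theorem uniformly for all $t \in (0,T)$. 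I do not anticipate a substantial obstacle here: the real work was done in proving the cheap control inequality and Lemma \ref{lemma12}; the remaining argument is a standard iteration/continuous-embedding step, and the only care needed is to check that $\tau$, $\alpha$, and $C_2$ can all be chosen independently of both $N$ and $T$, which is immediate from the uniform $N$-independence of $C_0$ and $C_1$.
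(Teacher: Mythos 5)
Your proposal is correct and follows essentially the same route as the paper's proof: fix $\tau>C_0C_1$ (the paper leaves $\tau$ general where you specialize to $\tau=2C_0C_1$), apply Lemma \ref{lemma13} at $n=\lfloor t/\tau\rfloor$ with $n>t/\tau-1$ to get the exponential bound on $[\tau,T)$, and cover $(0,\tau)$ via Lemma \ref{lemma12}, absorbing the factor $C_1$ into $C_2$. The resulting constants coincide with the paper's choice $\hat{C}_2=\tau/(C_0C_1)$, $\alpha=\frac{1}{\tau}\log(\tau/(C_0C_1))$, $C_2=C_1\hat{C}_2$ once $\tau=2C_0C_1$ is substituted.
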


\begin{proof}
    In this proof, we need to fix the constant $\tau$ in Lemma \ref{lemma13} such that $\tau>C_0C_1$. Next, we discuss the cases $t\in(0,\tau)$ and $t\in[\tau,T)$ separately. 
    
    For any $t\in[\tau,T)$, we take the integer $n=\lfloor t/\tau \rfloor$. Then,  $1\leq n \leq \frac{T}{\tau}$  and $t\in [n\tau,T)$
    and we obtain by Lemma \ref{lemma13}:
\begin{align*}
    \frac{1}{N}\sum_{i=1}^N|x_i(t)-\bar{x}|^2 \leq &~ \left(\frac{C_0C_1}{\tau}\right)^n \frac{1}{N}\sum_{i=1}^N|x_i(0)-\bar{x}|^2.
\end{align*}
Due to the definition of $n$, we have  $n>t/\tau-1$. Also, the constant $\tau$ is chosen such that $\tau>C_0C_1$. Thus we have
\begin{align*}
    &~ \left(\frac{C_0C_1}{\tau}\right)^n = \left(\frac{\tau}{C_0C_1}\right)^{-n} 
    \leq \left(\frac{\tau}{C_0C_1}\right)^{1-t/\tau}.
\end{align*}
The exponential estimate is then given by
$$
    \frac{1}{N}\sum_{i=1}^N|x_i(t)-\bar{x}|^2 \leq \hat{C}_2 e^{-\alpha t} \frac{1}{N}\sum_{i=1}^N|x_i(0)-\bar{x}|^2,\quad \forall ~t\in[\tau, T)
$$
with 
$$
 \hat{C}_2 = \frac{\tau}{C_0C_1},\qquad \alpha = \frac{1}{\tau}\log\left(\frac{\tau}{C_0C_1}\right)>0.
$$

On the other hand, for $t\in(0,\tau)$, we have
$$
 \hat{C}_2 e^{-\alpha t} \geq  \hat{C}_2 e^{-\alpha \tau} = 1.
$$
By Lemma \ref{lemma12} we have 
\begin{equation*}
    \frac{1}{N} \sum_{i=1}^N |x_i(t)-\bar{x}|^2 
\leq C_1 \frac{1}{N}\sum_{i=1}^N |x_i(0)-\bar{x}|^2 \leq C_1  \hat{C}_2 e^{-\alpha t}\frac{1}{N}\sum_{i=1}^N |x_i(0)-\bar{x}|^2.
\end{equation*}
Recall, that due to the proof of Lemma \ref{lemma12},  $C_1\geq 1$ holds. To combine the results of $t\in(0,\tau)$ and $t\in[\tau, T)$, we  take $C_2=C_1 \hat{C}_2$ and obtain
\begin{equation}\label{thm14-eq4}
\frac{1}{N} \sum_{i=1}^N |x_i(t)-\bar{x}|^2 
\leq C_2 e^{-\alpha t}\frac{1}{N}\sum_{i=1}^N |x_i(0)-\bar{x}|^2,\quad \forall~t\in(0,T).
\end{equation}
\end{proof}

This theorem implies that the empirical measure has equi-compact support and bounded second-order moments for any number of particles $N$. Moreover, we know that the empirical measure $\mu_N(t,x)$ defined in \eqref{empirical} satisfies
$$
\mathcal{W}_2(\mu_N(t,\cdot),\delta(x-\bar{x})) \leq \sqrt{C_2} e^{-\alpha t/2} \mathcal{W}_2(\mu_N(0,\cdot),\delta(x-\bar{x})).
$$

We established the exponential decay property for the second-order moment of the empirical measures $\mu_N(t,\cdot)$ with respect to $t$:
$$
    \int |x-\bar{x}|^2 d\mu_N(t,x) \leq C_2 e^{-\alpha t} \int |x-\bar{x}|^2 d\mu_N(0,x).
$$
The constant $C_2$ is independent of $N$. Thus, we can use the uniform $\mathcal{W}_2$ convergence to obtain the exponential turnpike property in the mean field limit. Namely, we have
\begin{theorem}\label{theorem-m-s}
    Suppose Assumption \ref{ass1} holds. For problem $\mathcal{Q}(0,T,\mu_0)$, the optimal solution $\mu(t,x)\in C([0,T];P_2(\mathbb{R}^d))$ satisfies the exponential turnpike property in the sense that
    $$
    \int |x-\bar{x}|^2 d\mu(t,x) \leq C_2 e^{-\alpha t} \int |x-\bar{x}|^2 d\mu_0(x)
    $$
    for any $t\in(0,T)$. Here the constants $C_2$ and $\alpha$ are the same as those in Theorem \ref{thm14}.
\end{theorem}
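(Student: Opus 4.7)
The plan is to lift the particle-level estimate of Theorem \ref{thm14} to the mean-field level by passing to the limit along the subsequence $\{N_k\}$ furnished by Theorem \ref{thm21}. First, I would rewrite the particle estimate in terms of the empirical measures. By the definition \eqref{empirical} of $\mu_{N_k}$,
$$
\int|x-\bar x|^2\, d\mu_{N_k}(t,x)=\frac{1}{N_k}\sum_{i=1}^{N_k}|x_i(t)-\bar x|^2,
$$
so Theorem \ref{thm14} applied to $\mathcal{Q}_{N_k}(0,T,x_0)$ yields, for every $t\in(0,T)$ and every $k$,
$$
\int|x-\bar x|^2\, d\mu_{N_k}(t,x)\leq C_2\,e^{-\alpha t}\int|x-\bar x|^2\, d\mu_{N_k}(0,x),
$$
with $C_2$ and $\alpha$ independent of $k$.

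The remaining task is to show that both sides converge as $k\to\infty$ to the corresponding integrals against $\mu(t,\cdot)$ and $\mu_0$. Here I would exploit the equi-compact support guaranteed by Theorem \ref{thm21}: every $\mu_{N_k}(t,\cdot)$ and $\mu(t,\cdot)$ is supported in the fixed ball $B(0,R)$. The map $x\mapsto|x-\bar x|^2$ is continuous and bounded on $B(0,R)$, so narrow convergence on this tight family is enough to pass the integrals. The uniform (in $t$) $\mathcal{W}_2$-convergence of $\mu_{N_k}(t,\cdot)$ to $\mu(t,\cdot)$ stated after Lemma \ref{lemma-W2-stability} implies narrow convergence, and hence
$$
\lim_{k\to\infty}\int|x-\bar x|^2\, d\mu_{N_k}(t,x)=\int|x-\bar x|^2\, d\mu(t,x),
$$
together with the analogous identity at $t=0$ from the $\mathcal{W}_1$-convergence of $\mu_{N_k}(0,\cdot)$ to $\mu_0$ assumed in Theorem \ref{thm21}.

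Passing $k\to\infty$ on both sides of the particle-level inequality then delivers the claim with exactly the same constants $C_2$ and $\alpha$. The main subtlety is not an analytical difficulty but the bookkeeping for the constants: one must confirm that $C_2$ and $\alpha$ from Theorem \ref{thm14} depend only on $C_0$, $C_1$, $C_P$ and the free parameter $\tau$ (itself chosen only in terms of $C_0C_1$), all of which are $N$-independent by Lemma \ref{lemma11} and Lemma \ref{lemma12}. Once this uniformity is verified, the limit passage is automatic and no stability argument beyond what is already recorded in Theorem \ref{thm21} is required.
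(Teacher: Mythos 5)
Your proposal is correct and follows essentially the same route as the paper: the paper likewise notes that the particle-level bound of Theorem \ref{thm14} is exactly the second-moment inequality for the empirical measures with constants independent of $N$, and then passes to the limit using the uniform $\mathcal{W}_2$ convergence of $\mu_{N_k}(t,\cdot)$ to $\mu(t,\cdot)$. Your extra care about equi-compact supports and narrow convergence is a harmless (indeed slightly more explicit) justification of the same limit passage.
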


\begin{remark}
Alternatively, the result on the mean field problem can be also proven by a direct estimate of \eqref{meq}. 
Namely, we may take a test function  $\phi(t,x)=|x-\bar{x}|^2\chi_R(x)$ with $\chi_R(x)$ being a mollified characteristic function $\chi_R(x)=\psi_\delta \ast \chi_{[-R-\delta,R+\delta]}$, such that $\chi_R(x)=1$ for $|x|\leq R$. 

Then by the same argument as  in Lemma \ref{lemma12}, we have 
\begin{eqnarray*}
\begin{aligned}
    &\int |x-\bar{x}|^2 d\mu(t_2,x) \leq C_1 \int |x-\bar{x}|^2 d\mu(t_1,x),\qquad \forall~0\leq t_1 \leq t_2 \leq T.
\end{aligned}
\end{eqnarray*}
Similarly, the inequalities analog to those in Lemma \ref{lemma13} and Theorem \ref{thm14} can be also obtained.  
\end{remark}

\subsection{Estimate on the control}\label{Section 4-2}
In this subsection, we estimate the optimal control $u(t,x)$ in the mean field problem. The idea is to construct a novel feedback control and take advantage of the strict dissipativity. 

We divide the time interval $[0,T]$ into three parts:
$$
[0,T] = [0,s)\cup [s,s+mh] \cup (s+mh,T].
$$
Here $s\in (0,T)$ is a fixed time point, $m>0$ is a scale parameter which will be given later (see \eqref{m}), and $h$ is a sufficiently small constant such that $s+mh\leq T$. We construct a feedback control $\hat{u}(t,x)$ by
\begin{equation}\label{feedback-est}
\hat{u}(t,x) = 
\left\{
\begin{array}{ll}
    u(t,x), &  \qquad t\in[0,s) \\[3mm]
    \dfrac{1}{m}u\left(s+\dfrac{t-s}{m},x\right)-\dfrac{m-1}{m} (P\ast \hat{\mu})(t,x) & \qquad t\in[s,s+mh] \\[5mm]
    u\Big(t-(m-1)h,x\Big), &  \qquad t\in(s+mh,T],
\end{array}
\right.
\end{equation}
where $u(t,x)$ is the optimal control to the problem \eqref{mec}-\eqref{meq} on the time interval $[0,T]$, $\hat{\mu}(t,x)$ is the solution of \eqref{meq} associated to the new control $\hat{u}(t,x)$, 


Next, we  discuss the solution $\hat{\mu}(t,x)$ on the different time intervals. 


For $t\in[0,s)$, we know that $\hat{u}(t,x)=u(t,x)$ and the initial data satisfies 
    $$
    \hat{\mu}(0,\cdot) = \mu_0(\cdot)\quad \text{in}~ P_2(\mathbb{R}^d).
    $$ 
    According to the uniqueness of solution to the mean field equation \eqref{meq}, it is easy to see that 
$$
\hat{\mu}(t,\cdot) = \mu(t,\cdot)\quad \text{in}~P_2(\mathbb{R}^d),\qquad \forall ~t\in[0,s].
$$
Here,  $\mu(t,x)$ is the solution associated to the optimal control $u(t,x)$.

On the other hand, for $t\in [s,s+mh]$, we use the expression of $\hat{u}(t,x)$ to compute the equation of $\hat{\mu}$ (for simplicity in the strong form). A similar computation holds in the weak form. 
\begin{align}
    0=&~\partial_t \hat{\mu}(t,x) + \nabla_x \cdot \Big( \big[(P\ast \hat{\mu})(t,x)+\hat{u}(t,x)\big]\hat{\mu}(t,x)\Big)\nonumber 
 \\[2mm]
 =&~ \partial_t \hat{\mu}(t,x) + \nabla_x \cdot \Big( \Big[(P\ast \hat{\mu})(t,x)+\frac{1}{m} u\left(s+\dfrac{t-s}{m},x\right) - \frac{m-1}{m}(P\ast \hat{\mu})(t,x) \Big]\hat{\mu}(t,x)\Big) \nonumber \\[2mm]
    =&~ \partial_t \hat{\mu}(t,x) + \frac{1}{m} \nabla_x \cdot \Big( \Big[(P\ast \hat{\mu})(t,x)+u\left(s+\dfrac{t-s}{m},x\right)\Big]\hat{\mu}(t,x)\Big). \nonumber
\end{align}
Moreover, by the first step, we have  
$$
\hat{\mu}(s,\cdot) = \mu(s,\cdot)\quad \text{in}~P_2(\mathbb{R}^d). 
$$
Thus, the equation for $\hat{\mu}$  reads (in weak form)  
\begin{eqnarray}\label{mideq}
    \begin{aligned}
    &\int \phi(t,x)d\hat{\mu}(t,x) -\int \phi(s,x)d\mu(s,x) \\[2mm]
    = &\int_{0}^{t} \int \Big[\partial_t \phi(r,x) + \frac{1}{m} \nabla_x \phi(r,x) \cdot \Big((P\ast \hat{\mu})(r,x) + u\left(s+\dfrac{r-s}{m},x\right)\Big) \Big]d\hat{\mu}(r,x)dr \\[2mm]
    &\quad \forall~\phi(t,x)\in C_0^{\infty}\left([s,s+mh]\times \mathbb{R}^d\right).
\end{aligned}
\end{eqnarray}
Since the map 
$$
t_1 = s+\dfrac{t-s}{m}
$$
is bijective, we consider the test function
$$
\phi(t,x) = \hat{\phi}(t_1,x) =\hat{\phi}\left(s+\dfrac{t-s}{m},x\right)\quad \text{with}\quad 
\hat{\phi}\in C_0^{\infty}\left([s,s+h]\times \mathbb{R}^d\right)
$$
and the formula \eqref{mideq} is equivalent to 
\begin{eqnarray}\label{mideq-cv}
    \begin{aligned}
    &\int \hat{\phi}(t_1,x) d\hat{\mu}(t,x) -\int \hat{\phi}(s,x)d\mu(s,x) \\[2mm]
    =& \int_{0}^{t} \int \Big[\partial_{t} \hat{\phi}(r_1,x) + \nabla_x \hat{\phi}(r_1,x) \cdot \big((P\ast \hat{\mu})(r,x) + u(r_1,x)\big) \Big]d\hat{\mu}(r,x)dr_1, \\[2mm]
    &\forall~\hat{\phi}\in C_0^{\infty}\left([s,s+h]\times \mathbb{R}^d\right). 
\end{aligned}
\end{eqnarray}
Here, we use the relation 
$$
r_1=s+\dfrac{r-s}{m},\qquad dr_1=\frac{1}{m}dr, 
$$
and obtain that 
$$
\mu(t_1,x) = \mu\left(s+\dfrac{t-s}{m},x\right)
$$ 
is a solution to \eqref{mideq-cv}. Again,  $\mu(t,x)$ is the solution associated to the optimal control $u(t,x)$.
Since the solution for \eqref{meq} is unique in $P_2(\mathbb{R}^d)$, we have
\begin{equation}\label{est-step2}
\hat{\mu}(t,\cdot) = \mu(t_1,\cdot) = \mu\left(s+\dfrac{t-s}{m},\cdot\right) \quad \text{in}~P_2(\mathbb{R}^d),\qquad \forall ~t\in[s,s+mh].
\end{equation}
In the last interval, for $t\in (s+mh,T]$, the control is $\hat{u}(t,x)=u\big(t-(m-1)h,x\big)$ and the equation for $\hat{\mu}$ reads (in strong form): 
\begin{align}
    0=&~\partial_t \hat{\mu}(t,x) + \nabla_x \cdot \Big( \big[(P\ast \hat{\mu})(t,x)+\hat{u}(t,x)\big]\hat{\mu}(t,x)\Big)\nonumber 
 \\[2mm]
 =&~ \partial_t \hat{\mu}(t,x) + \nabla_x \cdot \Big( \Big[(P\ast \hat{\mu})(t,x)+ u\big(t-(m-1)h,x\big) \Big]\hat{\mu}(t,x)\Big).\nonumber
\end{align}
Considering $t=s+mh$, we have 
$$
\hat{\mu}(s+mh,\cdot) = \mu(s+h,\cdot)\quad \text{in}~P_2(\mathbb{R}^d). 
$$
Thus, the weak form  in the time interval $(s+mh,T]$ reads as 
\begin{eqnarray}\label{mideq2}
    \begin{aligned}
    &\int \phi(t,x)d\hat{\mu}(t,x) -\int \phi(s+mh,x)d\mu(s+h,x) \\[2mm]
    = &\int_{0}^{t} \int \Big[\partial_t \phi(r,x) + \nabla_x \phi(r,x) \cdot \Big((P\ast \hat{\mu})(r,x) + u\big(t-(m-1)h,x\big) \Big]d\hat{\mu}(r,x)dr \\[2mm]
    &\quad \forall~\phi(t,x)\in C_0^{\infty}\left((s+mh,T]\times \mathbb{R}^d\right).
\end{aligned}
\end{eqnarray}
In the new variable 
$
t_2 = t-(m-1)h
$
and for the test function
$$
\phi(t,x) = \hat{\phi}(t_2,x) =\hat{\phi}\left(t-(m-1)h,x\right)\quad \text{with}\quad 
\hat{\phi}\in C_0^{\infty}\left((s+h,T-(m-1)h]\times \mathbb{R}^d\right),
$$
equation \eqref{mideq2} reads
\begin{eqnarray}\label{mideq2-cv}
    \begin{aligned}
    &\int \hat{\phi}(t_2,x) d\hat{\mu}(t,x) -\int \hat{\phi}(s+mh,x)d\mu(s+h,x) \\[2mm]
    = &\int_{0}^{t} \int \Big[\partial_{t} \hat{\phi}(r_2,x) + \nabla_x \hat{\phi}(r_2,x) \cdot \big((P\ast \hat{\mu})(r,x) + u(r_2,x)\big) \Big]d\hat{\mu}(r,x)dr_2, \\[2mm]
    &\quad \forall~\hat{\phi}\in C_0^{\infty}\left((s+h,T-(m-1)h]\times \mathbb{R}^d\right)
\end{aligned}
\end{eqnarray}
for $ r_2=r-(m-1)h$ and  $dr_2=dr.$
It is easy to see that 
$
\mu(t_2,x) = \mu\left(t-(m-1)h,x\right)
 $
satisfies \eqref{mideq2-cv}.
At last, we use the uniqueness of \eqref{meq} in $P_2(\mathbb{R}^d)$ to conclude that
\begin{equation}
\hat{\mu}(t,\cdot) = \mu(t_2,\cdot) = \mu\left(t-(m-1)h,\cdot\right) \quad \text{in}~P_2(\mathbb{R}^d),\qquad \forall ~t\in(s+mh,T].
\end{equation}
Summarizing, we have
\begin{equation}\label{proof-control-x}
\hat{\mu}(t,\cdot) = 
\left\{
\begin{array}{ll}
    \mu(t,\cdot), &  \qquad t\in[0,s), \\[4mm]
    \mu\left(s+\dfrac{t-s}{m},~\cdot\right), & \qquad t\in[s,s+mh],\\[5mm]
    \mu\left(t-(m-1)h,~\cdot\right), &  \qquad t\in(s+mh,T].
\end{array}
\right.
\end{equation}

\subsection{The turnpike estimate }
Having the feedback control $\hat{u}(t,x)$ and its associated solution $\hat{\mu}(t,x)$, we proceed to estimate the optimal control $u(t,x)$: 
\begin{theorem}\label{thm14-c}
    Suppose Assumption \ref{ass1} holds. Then there exists a constant $C_{3}>0$ such that the optimal control $u(t,x)\in \mathcal{F}$ for $\mathcal{Q}(0,T,\mu_0)$ satisfies the exponential turnpike property:
    $$
    \int |u(t,x)|^2 d\mu(t,x) \leq C_{3} e^{-\alpha t} \int |x-\bar{x}|^2 d\mu_0(x) \mbox{ for a.e. } t\in(0,T).
    $$
\end{theorem}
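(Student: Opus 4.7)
The plan is to exploit the optimality of $u$ against the cleverly designed feedback $\hat{u}$ of \eqref{feedback-est}: since $u$ is optimal for $\mathcal{Q}(0,T,\mu_0)$, we must have $\int_0^T f(\mu,u)\,dt \leq \int_0^T f(\hat\mu,\hat u)\,dt$. The decomposition \eqref{proof-control-x} is designed precisely so that the contributions on $[0,s)$ cancel trivially and the contributions on $(s+mh,T]$ match, after a change of variable $t_2 = t - (m-1)h$, the cost of $(\mu,u)$ on the interval $[s+h,\,T-(m-1)h]$. After cancellation, dropping the nonnegative running cost of $u$ on $[T-(m-1)h,T]$, the inequality reduces to
\begin{equation*}
\int_s^{s+h} f(\mu(t,\cdot),u(t,\cdot))\,dt \;\leq\; \int_s^{s+mh} f(\hat\mu(t,\cdot),\hat u(t,\cdot))\,dt,
\end{equation*}
valid whenever $s+mh \leq T$.

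Next I would bound the right-hand side. On $[s,s+mh]$, changing variables via $t_1 = s + (t-s)/m$ (so $dt = m\,dt_1$) together with \eqref{est-step2} turns $d\hat\mu(t,x)\,dt$ into $m\,d\mu(t_1,x)\,dt_1$ on $[s,s+h]$. The state part produces $C_L m \int_s^{s+h}\!\!\int |x-\bar x|^2 d\mu\,dt_1$. The control part uses $|\hat u|^2 \leq \tfrac{2}{m^2}|u(t_1,x)|^2 + 2\tfrac{(m-1)^2}{m^2}|(P\ast\hat\mu)(t,x)|^2$, combined with Assumption \ref{ass1}(ii) ($\Psi \leq C_\Psi |u|^2$); then Jensen together with the Lipschitz property $|P(x-y)| \leq C_P|x-y|$ and $P(0)=0$ yield
\begin{equation*}
\int |(P\ast\mu(t_1,\cdot))(x)|^2 d\mu(t_1,x) \leq 4 C_P^2 \int |x-\bar x|^2 d\mu(t_1,x).
\end{equation*}
Combining these, the right-hand side is bounded by $\bigl(C_L m + 8 C_\Psi C_P^2 (m-1)^2/m\bigr)\int_s^{s+h}\!\!\int |x-\bar x|^2 d\mu\,dt_1 + \tfrac{2C_\Psi}{m}\int_s^{s+h}\!\!\int |u|^2 d\mu\,dt_1$. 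The strict dissipativity bound from Assumption \ref{ass1}(i) on the left-hand side provides $C_D \int_s^{s+h}\!\!\int (|x-\bar x|^2 + |u|^2)\,d\mu\,dt_1$.

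Now the crucial observation: choose $m$ large enough (depending only on $C_D$ and $C_\Psi$, say $m = \lceil 4C_\Psi/C_D\rceil$) so that $\tfrac{2C_\Psi}{m} \leq C_D/2$, and absorb the $|u|^2$-term from the right into the left. This yields
\begin{equation*}
\int_s^{s+h}\!\!\int |u|^2 d\mu\,dt_1 \;\leq\; K \int_s^{s+h}\!\!\int |x-\bar x|^2 d\mu\,dt_1
\end{equation*}
with $K$ a constant independent of $s,h,T$. Dividing by $h$ and invoking the Lebesgue differentiation theorem as $h\downarrow 0$ gives, for a.e.\ $s \in (0,T)$, the pointwise cheap-control bound $\int |u(s,x)|^2 d\mu(s,x) \leq K \int |x-\bar x|^2 d\mu(s,x)$. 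Composing this with the already-proven exponential estimate of Theorem \ref{theorem-m-s} produces $C_3 := K C_2$ and the claimed inequality.

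The main obstacle will be the analytic cleanness of the three-interval feedback trick: one has to verify that $\hat u$ lies in $\mathcal F$ (the Lipschitz bound of $P\ast\hat\mu$ must be controlled by $C_B$, or else the feedback must be composed with a cutoff), and that the identifications in \eqref{proof-control-x} are rigorous in the weak sense used in Theorem \ref{thm21}. The other subtle point is making sure the constant $m$ can be fixed once and for all independently of $s$, $h$, and $T$, which is precisely why the scaling $\tfrac{1}{m} u(t_1,x)$ in the middle interval is needed: it is this rescaling that produces the $1/m$ factor in front of $\int |u|^2$ and permits the absorption argument.
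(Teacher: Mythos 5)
Your proposal is correct and follows essentially the same route as the paper's own proof: the same three-interval feedback control \eqref{feedback-est}, the same cancellation and change-of-variables reduction to $\int_s^{s+h} f(\mu,u)\,dt \leq \int_s^{s+mh} f(\hat\mu,\hat u)\,dt$, the same absorption of the $\tfrac{1}{m}\int|u|^2$ term via a large fixed $m$ and strict dissipativity, and the same Lebesgue-differentiation plus Theorem \ref{theorem-m-s} conclusion. The only differences are immaterial constants (your Young-inequality split and choice of $m$ versus the paper's $m=\max\{2,2C_4/C_D\}$ with $C_4=\max\{C_\Psi,C_L\}$).
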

\begin{proof}
Since $u(t,x)$ is optimal, we have
\begin{align}
    &\int_{0}^Tf(\mu(t,x),u(t,x))dt
    \leq \int_{0}^Tf(\hat{\mu}(t,x),\hat{u}(t,x))dt \nonumber \\[2mm]
    =&~ \int_{0}^sf(\hat{\mu}(t,x),\hat{u}(t,x))dt + \int_s^{s+mh}f(\hat{\mu}(t,x),\hat{u}(t,x))dt + \int_{s+mh}^Tf(\hat{\mu}(t,x),\hat{u}(t,x))dt. \label{pr-eq1}
\end{align}
According to \eqref{feedback-est} and \eqref{proof-control-x}, we have  
\begin{equation}\label{pr-eq2}
    \int_{0}^sf(\hat{\mu}(t,x),\hat{u}(t,x))dt = \int_{0}^sf(\mu(t,x),u(t,x))dt
\end{equation}
and 
\begin{align}
    \int_{s+mh}^Tf(\hat{\mu}(t,x),\hat{u}(t,x))dt =~ \int_{s+mh}^Tf(\mu\left(t-(m-1)h,x\right),u\left(t-(m-1)h,x\right))dt \nonumber \\[2mm]
    =~ \int_{s+h}^{T-(m-1)h}f(\mu(t,x),u(t,x))dt 
    \leq ~ \int_{s+h}^{T}f(\mu(t,x),u(t,x))dt.\label{pr-eq3}
\end{align}
Therefore, it follows by \eqref{pr-eq1}-\eqref{pr-eq3}  
\begin{align}
    \int_s^{s+h}f(\mu(t,x),u(t,x))dt
    \leq &~ \int_s^{s+mh}f(\hat{\mu}(t,x),\hat{u}(t,x))dt \nonumber \\[2mm]
    \leq &~ C_4 \int_s^{s+mh}\int |x-\bar{x}|^2  + | \hat{u}(t,x)|^2 d\hat{\mu}(t,x)dt \label{prf-eq4}
\end{align}
with $C_4=\max\{C_{\Psi},C_L\}$.
Notice that the last inequality is due to Assumption \ref{ass1}. Moreover, we use \eqref{feedback-est} and \eqref{proof-control-x} to obtain
\begin{align*}
&~ C_4 \int_s^{s+mh}\int |x-\bar{x}|^2  + | \hat{u}(t,x)|^2 d\hat{\mu}(t,x)dt \\[2mm]
= &~ C_4 \int_s^{s+mh} \int | x-\bar{x}|^2 + | \frac{1}{m} u(t_1,x)- \frac{m-1}{m}(P\ast \mu)(t_1,x)|^2 d\mu(t_1,x)dt
\end{align*}
with $t_1=s+\dfrac{t-s}{m}$. By change of variables, the above inequality yields
\begin{align}
&~ C_4 \int_s^{s+mh}\int |x-\bar{x}|^2  + | \hat{u}(t,x)|^2 d\hat{\mu}(t,x)dt \nonumber \\[2mm]
\leq &~ m C_4 \int_s^{s+h} \int | x-\bar{x}|^2 + | \frac{1}{m} u(t,x)- \frac{m-1}{m}(P\ast \mu)(t,x)|^2 d\mu(t,x)dt \nonumber \\[2mm]
\leq &~ m C_4 \int_s^{s+h} \int | x-\bar{x}|^2 + \frac{3}{2}\frac{1}{m^2} |u(t,x)|^2 + 3\Big| \frac{m-1}{m}(P\ast \mu)(t,x)\Big|^2 d\mu(t,x)dt. \label{prf-eq5}
\end{align}
Note that the last inequality follows from the basic inequality 
$$
|a+b|^2 \leq \frac{3}{2}|a|^2 + 3|b|^2.
$$
Using Jensen's inequality and Assumption \ref{ass1}, we have
\begin{align}\label{prf-eq5.5}
|(P \ast \mu)(t,x)|^2 \leq \int |P(x-y)|^2 d \mu(t,y)
\leq C_P^2 |x-\bar{x}|^2 + C_P^2 \int |y-\bar{x}|^2 d\mu(t,y).
\end{align}
By \eqref{prf-eq4}-\eqref{prf-eq5.5},  there exists a constant $C_5>0$ depending on $C_P,C_\Psi,C_L$ and $m$, such that
\begin{align}\label{prf-eq6}
\int_s^{s+h}f(\mu(t,x),u(t,x))dt 
    \leq  \frac{3}{2} \frac{C_4}{m}\int_s^{s+h} \int |u(t,x)|^2 d \mu(t,x)dt + C_5 \int_s^{s+h} \int |x-\bar{x}|^2 d \mu(t,x) dt.
\end{align}
On the other hand, by the strict dissipativity we obtain 
\begin{align}
\int_s^{s+h}f(\mu(t,x),u(t,x))dt \geq&~ C_D \int_s^{s+h} \int |x-\bar{x}|^2  + | u(t,x)|^2 d\mu(t,x) dt \nonumber\\
\geq&~ C_D \int_s^{s+h} \int| u(t,x)|^2 d\mu(t,x) dt. \label{prf-eq7}
\end{align}
By equation \eqref{prf-eq6}-\eqref{prf-eq7} we conclude that 
\begin{align*}
\int_s^{s+h} \int | u(t,x)|^2 d\mu(t,x) dt  \leq  \frac{3}{2} \frac{C_4}{m C_D}\int_s^{s+h} \int |u(t,x)|^2 d \mu(t,x)dt 
+ \frac{C_5}{C_D} \int_s^{s+h} \int |x-\bar{x}|^2 d \mu(t,x) dt.
\end{align*}
Set 
\begin{equation}\label{m}
    m = \max\left\{2,  \frac{2C_4}{C_D}\right\},
\end{equation}
and hence, 
$
\frac{3}{2} \frac{C_4}{m C_D} \leq \frac{3}{4}. 
$
Therefore, 
\begin{align*}
\int_s^{s+h} \int | u(t,x)|^2 d\mu(t,x) dt  \leq&~ \frac{3}{4} \int_s^{s+h} \int |u(t,x)|^2 d \mu(t,x)dt + \frac{C_5}{C_D} \int_s^{s+h} \int |x-\bar{x}|^2 d \mu(t,x) dt.
\end{align*}
Since $m$ is given, we know that the constant $C_5>0$  depends only on $C_P,C_\Psi,$ and $C_L$, respectively. This holds for any $h$ satisfying $s+mh\leq T$. By Lebesgue's differentiation theorem we obtain
$
\int | u(s,x)|^2 d\mu(s,x) \leq \frac{4C_5}{C_D}  \int | x-\bar{x}|^2 d\mu(s,x) \mbox{ for a.e. } t\in(0,T).$
Combining this estimate with the results of Theorem \ref{theorem-m-s},  the proof is completed for  
$
C_3 = \frac{4C_2C_5}{C_D}.
$
\end{proof}

\begin{remark}
By Theorem \ref{theorem-m-s} and Theorem \ref{thm14-c}, the cost in \eqref{mec} also decreases exponentially since
\begin{align*}
\int_{0}^T f(\mu(t,x),u(t,x)) dt \leq &~C_L
\int_{0}^T \int |x-\bar{x}|^2 d\mu(t,x) dt + C_{\Psi} \int_{0}^T \int |u(t,x)|^2 d\mu(t,x) dt \\[2mm]
\leq &~(C_L C_2 + C_{\Psi} C_3) e^{-\alpha t} \int |x-\bar{x}|^2 d\mu_0(x).
\end{align*}
\end{remark}

\begin{remark}
    In the proof, we adapt the technique in \cite{EGPZ} by considering a new feedback control and introducing an adaptive parameter $m$ in \eqref{m}. If the cost function in equation \eqref{mec} of quadratic form,
    $$
    f(\mu(t,x),u(t,x))=\int|x-\bar{x}|^2 d\mu(t,x) + \int |u(t,x)|^2 d\mu(t,x),
    $$
    then, we have $C_\Psi = 1$, $C_L=1$ and $C_D=1$. It follows that $C_4=1$ and $m=2$.
\end{remark}

\begin{remark}
The exponential turnpike property for the optimal control problem of the $N$-particles system \eqref{particle-pb} can also be proved by considering the feedback control
$$
\tilde{u}_N(t,\tilde{x}_i(t)) = 
\left\{
\begin{array}{ll}
    u_N(t,\tilde{x}_i(t)), &  \qquad t\in[0,s) \\[2mm]
    \dfrac{1}{m}u_N(t_1,\tilde{x}_i(t))-\dfrac{m-1}{m}\dfrac{1}{N}\sum_{j=1}^NP(\tilde{x}_i(t)-\tilde{x}_j(t)) & \qquad t\in[s,s+mh] \\[4mm]
    u_N(t_2,\tilde{x}_i(t)), &  \qquad t\in(s+mh,T],
\end{array}
\right.
$$
where $t_1$ and $t_2$ are taken as those in the proof of Theorem \ref{thm14-c}.
\end{remark}

\section*{\normalsize{Acknowledgments}} 
The first author thanks the Deutsche Forschungsgemeinschaft (DFG, German Research Foundation) for the financial support through 442047500/SFB1481 within the projects B04 (Sparsity fördernde Muster in kinetischen Hierarchien), B05 (Sparsifizierung zeitabhängiger Netzwerkflußprobleme mittels diskreter Optimierung) and B06 (Kinetische Theorie trifft algebraische Systemtheorie) and though SPP 2298 Theoretical Foundations of Deep Learning  within the Project(s) HE5386/23-1, Meanfield Theorie zur Analysis von Deep Learning Methoden (462234017). The second author is funded by Alexander von Humboldt Foundation (Humboldt Research Fellowship Programme for Postdocs).

\begin{appendices}
\section{Proof of Lemma \ref{lemma-W2-stability} }\label{sectionA}
We follow the idea in \cite{FS,CCR} to prove the estimate in the Wasserstein distance $\mathcal{W}_2$ of Lemma \ref{lemma-W2-stability}: 
\par 
Let $\mathcal{T}^{\mu}_t$ be the flow map associated to the system
$$
\frac{dx(t)}{dt} = (P \ast \mu)(x(t)) + u(t,x(t)) = \int P(x(t)-y)d\mu(t,y) + u(t,x(t)).
$$
We know that $\mu(t) = \mathcal{T}^{\mu}_t \sharp \mu_0$ with $\mathcal{T}^{\mu}_t \sharp$ denotes the push-forward of $\mu_0$. Then, we have 
\begin{align}
\mathcal{W}_2(\mu(t),\nu(t)) = &~\mathcal{W}_2(\mathcal{T}^{\mu}_t \sharp \mu_0,\mathcal{T}^{\nu}_t \sharp \nu_0) \nonumber \\[2mm]
\leq &~\mathcal{W}_2(\mathcal{T}^{\mu}_t \sharp \mu_0,\mathcal{T}^{\mu}_t \sharp \nu_0) + \mathcal{W}_2(\mathcal{T}^{\mu}_t \sharp \nu_0,\mathcal{T}^{\nu}_t \sharp \nu_0). \label{Append-0}
\end{align}
For the first term, we have the following result. 
\begin{lemma}\label{lemma-a1}
Assume that $P$ satisfies the Lipschitz condition \eqref{Lip-P} and $u(t,x)\in \mathcal{F}$. Then, it holds that
$$
\mathcal{W}_2(\mathcal{T}^{\mu}_t \sharp \mu_0 , ~\mathcal{T}^{\mu}_t \sharp \nu_0) \leq e^{(C_P+C_B)t}~ \mathcal{W}_2(\mu_0,\nu_0).
$$
\end{lemma}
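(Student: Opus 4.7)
The plan is to exploit that both measures are pushed forward by the \emph{same} flow map $\mathcal{T}^{\mu}_t$, so an optimal transport plan for the initial data yields, after push-forward, a (not necessarily optimal) transport plan between $\mathcal{T}^{\mu}_t\sharp\mu_0$ and $\mathcal{T}^{\mu}_t\sharp\nu_0$. Concretely, let $\pi_0\in\Gamma(\mu_0,\nu_0)$ be an optimal plan realizing $\mathcal{W}_2(\mu_0,\nu_0)$, and set $\pi_t := (\mathcal{T}^{\mu}_t,\mathcal{T}^{\mu}_t)\sharp\pi_0$. A short verification using the marginal property shows $\pi_t\in\Gamma(\mathcal{T}^{\mu}_t\sharp\mu_0,\mathcal{T}^{\mu}_t\sharp\nu_0)$, and therefore
$$
\mathcal{W}_2^2\!\bigl(\mathcal{T}^{\mu}_t\sharp\mu_0,\,\mathcal{T}^{\mu}_t\sharp\nu_0\bigr)\;\leq\;\int_{\mathbb{R}^{2d}}|\mathcal{T}^{\mu}_t(x)-\mathcal{T}^{\mu}_t(y)|^2\,d\pi_0(x,y).
$$

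The main step is then a pointwise estimate of $|\mathcal{T}^{\mu}_t(x)-\mathcal{T}^{\mu}_t(y)|$ via Gronwall. Set $X(t)=\mathcal{T}^{\mu}_t(x)$ and $Y(t)=\mathcal{T}^{\mu}_t(y)$; both solve the characteristic ODE driven by the \emph{same} field $v(t,z)=(P\ast\mu)(t,z)+u(t,z)$. Differentiating $|X(t)-Y(t)|^2$, using Cauchy--Schwarz, the Lipschitz property \eqref{Lip-P} of $P$ (which gives $|(P\ast\mu)(t,X)-(P\ast\mu)(t,Y)|\leq C_P|X-Y|$ after moving the absolute value inside the convolution integral), and the Lipschitz bound $\|u(t,\cdot)\|_{\rm Lip}\leq C_B$ from the definition of $\mathcal{F}$, one obtains
$$
\frac{d}{dt}|X(t)-Y(t)|^2 \;\leq\; 2(C_P+C_B)\,|X(t)-Y(t)|^2.
$$
Gronwall's lemma then yields $|X(t)-Y(t)|^2\leq e^{2(C_P+C_B)t}|x-y|^2$.

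Substituting this pointwise bound back into the integral against $\pi_0$ gives
$$
\mathcal{W}_2^2\!\bigl(\mathcal{T}^{\mu}_t\sharp\mu_0,\,\mathcal{T}^{\mu}_t\sharp\nu_0\bigr)\;\leq\; e^{2(C_P+C_B)t}\int|x-y|^2\,d\pi_0(x,y)\;=\;e^{2(C_P+C_B)t}\,\mathcal{W}_2^2(\mu_0,\nu_0),
$$
and taking square roots delivers the claim. The only genuinely delicate point is the Lipschitz bound on $P\ast\mu$, which is standard but must be done uniformly in $\mu$; this is exactly where Assumption \ref{ass1}(iii) is used. Everything else is a routine Gronwall-plus-push-forward argument, of the type already carried out for $\mathcal{W}_1$ in \cite{CCR,FS}, with the only change being that the quadratic cost $|x-y|^2$ replaces $|x-y|$ inside the integral.
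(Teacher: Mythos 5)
Your proposal is correct and follows essentially the same route as the paper: push an optimal plan for $(\mu_0,\nu_0)$ forward by $(\mathcal{T}^{\mu}_t,\mathcal{T}^{\mu}_t)$ to get an admissible plan, then bound $|\mathcal{T}^{\mu}_t(x)-\mathcal{T}^{\mu}_t(y)|$ by Gronwall using the $C_P$ and $C_B$ Lipschitz constants. The only cosmetic difference is that you apply Gronwall to $|X(t)-Y(t)|^2$ via differentiation while the paper applies it to $|x(t)-y(t)|$ in integral form; both give the same constant.
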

\begin{proof}

Set $\kappa$ to be an optimal transportation between $\mu_0$ and $\nu_0$. One can check that the measure
$\gamma = (\mathcal{T}^{\mu}_t \times \mathcal{T}^{\mu}_t)\sharp \kappa$ has marginals $\mathcal{T}^{\mu}_t \sharp \mu_0$ and $\mathcal{T}^{\mu}_t \sharp \nu_0$. 
Then we have
\begin{align}
\mathcal{W}_2(\mathcal{T}^{\mu}_t \sharp \mu_0 , ~\mathcal{T}^{\mu}_t \sharp \nu_0) \leq&~ \left(\int_{\mathbb{R}^d\times \mathbb{R}^d} |x_0 - y_0|^2 d\gamma(x_0, y_0) \right)^{1/2} \nonumber \\[2mm]
=&~ \left(\int_{\mathbb{R}^d\times \mathbb{R}^d} |\mathcal{T}^{\mu}_t(x_0) - \mathcal{T}^{\mu}_t(y_0)|^2  d\kappa(x_0, y_0) \right)^{1/2}. \label{Append-1}
\end{align}
Denote $x(t)=\mathcal{T}^{\mu}_t(x_0)$ and $y(t)=\mathcal{T}^{\mu}_t(y_0)$. We have 
\begin{align*}
    |x(t)-y(t)| 
    \leq&~ |x_0-y_0| + \int_0^t |(P \ast \mu)(x(s)) - (P \ast \mu)(y(s))| + |u(s,x(s))-u(s,y(s))| ds \\[2mm]
    \leq&~ |x_0-y_0| + C_P \int_0^t | x(s)- y(s)| ds + C_B \int_0^t | x(s)- y(s)| ds.
\end{align*}
By Gronwall's inequality, we have 
$$
|x(t)-y(t)| \leq e^{(C_P+C_B)t}|x_0-y_0|.
$$
Substituting this into \eqref{Append-1}, we have
$$
\mathcal{W}_2(\mathcal{T}^{\mu}_t \sharp \mu_0 , ~\mathcal{T}^{\mu}_t \sharp \nu_0) \leq e^{(C_P+C_B)t} \left(\int_{\mathbb{R}^d\times \mathbb{R}^d} |x_0 - y_0|^2 d\kappa(x_0, y_0) \right)^{1/2} = e^{(C_P+C_B)t}~ \mathcal{W}_2(\mu_0,\nu_0).
$$
\end{proof}

For the second term in \eqref{Append-0}, we have the following Lemma.
\begin{lemma}
Let $\mathcal{T}^{\mu}_t$ and $\mathcal{T}^{\nu}_t$ be two flow maps  associated to $\mu(t)$ and $\nu(t)$. Suppose the initial data $\nu_0 \in P_2(\mathbb{R}^d)$. Then,
$$
\mathcal{W}_2(\mathcal{T}^{\mu}_t \sharp \nu_0,\mathcal{T}^{\nu}_t \sharp \nu_0) \leq \|\mathcal{T}^{\mu}_t - \mathcal{T}^{\nu}_t\|_{\infty}.
$$
\end{lemma}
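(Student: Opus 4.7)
The plan is to build an explicit coupling between the two pushforward measures $\mathcal{T}^{\mu}_t \sharp \nu_0$ and $\mathcal{T}^{\nu}_t \sharp \nu_0$ and then simply estimate the resulting transport cost by the uniform norm of the difference of the flow maps. This is the same pattern that was used in the preceding Lemma \ref{lemma-a1}, but simpler, since here the two pushforwards share the same base measure $\nu_0$ rather than the same map.

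Concretely, the first step is to define the measure
\[
\gamma := \bigl(\mathcal{T}^{\mu}_t , \mathcal{T}^{\nu}_t\bigr)\sharp \nu_0
\]
on $\mathbb{R}^d \times \mathbb{R}^d$, i.e.\ the pushforward of $\nu_0$ under the joint map $x_0 \mapsto (\mathcal{T}^{\mu}_t(x_0), \mathcal{T}^{\nu}_t(x_0))$. I would check, by testing against bounded continuous $\phi(x)$ and $\psi(y)$, that the two marginals of $\gamma$ are exactly $\mathcal{T}^{\mu}_t \sharp \nu_0$ and $\mathcal{T}^{\nu}_t \sharp \nu_0$, so that $\gamma \in \Gamma(\mathcal{T}^{\mu}_t \sharp \nu_0,\mathcal{T}^{\nu}_t \sharp \nu_0)$ is an admissible transport plan.

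The second step is to invoke the definition of $\mathcal{W}_2$ as an infimum over admissible plans and use $\gamma$ as a competitor:
\[
\mathcal{W}_2^2(\mathcal{T}^{\mu}_t \sharp \nu_0,\mathcal{T}^{\nu}_t \sharp \nu_0) \leq \int_{\mathbb{R}^d \times \mathbb{R}^d} |x-y|^2 \, d\gamma(x,y) = \int_{\mathbb{R}^d} |\mathcal{T}^{\mu}_t(x_0) - \mathcal{T}^{\nu}_t(x_0)|^2 \, d\nu_0(x_0),
\]
where the last equality is simply the change-of-variables formula for the pushforward. Bounding the integrand pointwise by $\|\mathcal{T}^{\mu}_t - \mathcal{T}^{\nu}_t\|_\infty^2$ and using that $\nu_0$ is a probability measure then yields the desired inequality after taking square roots.

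I do not anticipate a genuine obstacle: the measurability of the joint map $(\mathcal{T}^{\mu}_t, \mathcal{T}^{\nu}_t)$ follows from the fact that each flow map is itself a measurable (in fact continuous, under the Lipschitz assumptions \eqref{Lip-P} and $u\in\mathcal{F}$) function of $x_0$, and the integrability required to compete in the $\mathcal{W}_2$ infimum comes for free because $\|\mathcal{T}^{\mu}_t - \mathcal{T}^{\nu}_t\|_\infty$ is assumed finite and $\nu_0 \in P_2(\mathbb{R}^d)$. The only mildly subtle point is that the supremum norm here should be understood over the support of $\nu_0$ (or essentially so with respect to $\nu_0$); this is consistent with the way the bound will be used in \eqref{Append-0}, where $\mu_0$ and $\nu_0$ are compactly supported by Theorem \ref{thm21}, so the flows stay in a bounded region and the $L^\infty$ bound is meaningful.
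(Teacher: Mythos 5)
Your proposal is correct and follows exactly the paper's own argument: both construct the coupling $(\mathcal{T}^{\mu}_t \times \mathcal{T}^{\nu}_t)\sharp \nu_0$, check its marginals, use it as a competitor in the $\mathcal{W}_2$ infimum, and bound the resulting integral by $\|\mathcal{T}^{\mu}_t - \mathcal{T}^{\nu}_t\|_{\infty}^2$. Your added remarks on measurability and on interpreting the supremum over the support of $\nu_0$ are sensible refinements but do not change the substance.
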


\begin{proof}
The proof is similar to that in Lemma 3.11 in \cite{CCR}. 
Consider a transportation plan defined by $\pi:= (\mathcal{T}^{\mu}_t \times \mathcal{T}^{\nu}_t)\sharp \nu_0$. One can
check that this measure has marginals $\mathcal{T}^{\mu}_t\sharp \nu_0$ and $\mathcal{T}^{\nu}_t\sharp \nu_0$. Then, due to the definition of Wasserstein metric, we have
\begin{align*}
\mathcal{W}_2(\mathcal{T}^{\mu}_t \sharp \nu_0, \mathcal{T}^{\nu}_t \sharp \nu_0) 
\leq&~ \left(\int_{\mathbb{R}^d\times \mathbb{R}^d} |x_0 - y_0|^2 \pi(x_0, y_0) dx_0 dy_0 \right)^{1/2} \\[2mm]
=&~ \left( \int_{\mathbb{R}^d} |\mathcal{T}^{\mu}_t(x_0) - \mathcal{T}^{\nu}_t(x_0)|^2 d\nu_0(x_0)\right)^{1/2}\\[2mm]
\leq &~ \|\mathcal{T}^{\mu}_t - \mathcal{T}^{\nu}_t\|_{\infty}.
\end{align*}
\end{proof}

Thanks to this, it suffices to estimate $\|\mathcal{T}^{\mu}_t - \mathcal{T}^{\nu}_t\|_{\infty}$. To this end, we state
\begin{lemma}\label{lemma-a3}
Under the assumptions in Lemma \ref{lemma-a1}, it holds that
$$
\|\mathcal{T}^{\mu}_t - \mathcal{T}^{\nu}_t\|_{\infty} \leq C_P \int_0^t e^{(C_P+C_B)(t-s)}~ \mathcal{W}_2(\mu(s),\nu(s)) ds.
$$
\end{lemma}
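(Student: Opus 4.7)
\textbf{Proposal for Lemma \ref{lemma-a3}.} The plan is to fix an arbitrary initial point $x_0$, compare the characteristic curves $x(t)=\mathcal{T}^\mu_t(x_0)$ and $\tilde{x}(t)=\mathcal{T}^\nu_t(x_0)$ driven by the same initial data and the same control $u(t,\cdot)$ but by two different convolution fields $P\ast\mu$ and $P\ast\nu$, establish a pointwise Gronwall-type inequality for $|x(t)-\tilde{x}(t)|$, and then take the supremum in $x_0$.

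First, I would write both curves in integral form and subtract, obtaining
\[
x(t)-\tilde{x}(t)=\int_0^t \bigl[(P\ast\mu)(s,x(s))-(P\ast\nu)(s,\tilde{x}(s))\bigr]\,ds+\int_0^t\bigl[u(s,x(s))-u(s,\tilde{x}(s))\bigr]\,ds.
\]
The term involving $u$ is handled directly by the Lipschitz bound $\|u(s,\cdot)\|_{\mathrm{Lip}}\le C_B$ from the definition of $\mathcal{F}$. For the convolution term I would insert $\pm(P\ast\mu)(s,\tilde{x}(s))$ and split into a \emph{spatial} difference $(P\ast\mu)(s,x(s))-(P\ast\mu)(s,\tilde{x}(s))$ and a \emph{measure} difference $(P\ast\mu)(s,\tilde{x}(s))-(P\ast\nu)(s,\tilde{x}(s))$. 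The spatial difference is bounded by $C_P|x(s)-\tilde{x}(s)|$ using the Lipschitz condition \eqref{Lip-P} together with the fact that $\mu(s,\cdot)$ is a probability measure.

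The measure difference is the step that requires the Wasserstein distance. I would introduce an optimal transport plan $\gamma_s\in\Gamma(\mu(s),\nu(s))$ for $\mathcal{W}_2$ and rewrite
\[
(P\ast\mu)(s,\tilde{x}(s))-(P\ast\nu)(s,\tilde{x}(s))=\int\bigl[P(\tilde{x}(s)-y_1)-P(\tilde{x}(s)-y_2)\bigr]\,d\gamma_s(y_1,y_2).
\]
The Lipschitz property of $P$ and Cauchy--Schwarz then yield
\[
\bigl|(P\ast\mu)(s,\tilde{x}(s))-(P\ast\nu)(s,\tilde{x}(s))\bigr|\le C_P\!\int|y_1-y_2|\,d\gamma_s\le C_P\,\mathcal{W}_2(\mu(s),\nu(s)).
\]
Combining the three estimates gives the scalar inequality
\[
|x(t)-\tilde{x}(t)|\le(C_P+C_B)\!\int_0^t|x(s)-\tilde{x}(s)|\,ds+C_P\!\int_0^t\mathcal{W}_2(\mu(s),\nu(s))\,ds.
\]

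Finally I would apply Gronwall's inequality in the form that accommodates a time-dependent forcing term: setting $g(t)=|x(t)-\tilde{x}(t)|$ and $h(s)=C_P\mathcal{W}_2(\mu(s),\nu(s))$, an integration-by-parts argument converts the bound $g(t)\le\int_0^t h+(C_P+C_B)\int_0^t g$ into
\[
g(t)\le C_P\!\int_0^t e^{(C_P+C_B)(t-s)}\mathcal{W}_2(\mu(s),\nu(s))\,ds.
\]
Because the right-hand side does not depend on $x_0$, taking the supremum over $x_0\in\mathbb{R}^d$ gives the claim. The main obstacle is the coupling estimate for the measure-difference term; everything else is Lipschitz bookkeeping plus a standard Gronwall argument.
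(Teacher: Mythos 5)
Your proposal is correct and follows essentially the same route as the paper: compare the two characteristics from a common $x_0$, split the convolution term into a spatial difference (bounded by $C_P|x(s)-\tilde{x}(s)|$) and a measure difference (bounded by $C_P\,\mathcal{W}_2(\mu(s),\nu(s))$ via an optimal coupling and Cauchy--Schwarz/Jensen), then apply Gronwall with a forcing term and take the supremum over $x_0$. The only cosmetic difference is that the paper applies Gronwall first with the forcing $\|(P\ast\mu)(s,\cdot)-(P\ast\nu)(s,\cdot)\|_{\infty}$ and bounds that norm by $C_P\,\mathcal{W}_2(\mu(s),\nu(s))$ afterwards, which is the same argument in a different order.
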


\begin{proof}
Denote $x^{\mu}(t) = \mathcal{T}^{\mu}_t(x_0)$ and $x^{\nu}(t) = \mathcal{T}^{\nu}_t(x_0)$. We compute 
\begin{align}\label{App-01}
    |x^{\mu}(t)-x^{\nu}(t)| \leq \int_0^t |(P\ast \mu)(x^{\mu}(s)) - (P\ast \nu)(x^{\nu}(s))|ds + \int_0^t |u(s,x^{\mu}(s)) - u(s,x^{\nu}(s))|ds.
\end{align}  
For the first term on the right hand side, we compute
\begin{align}
     &~\int_0^t \left|(P\ast \mu)(x^{\mu}(s)) - (P\ast \nu)(x^{\nu}(s)) \right| ds \nonumber \\[2mm]
    \leq&~ \int_0^t |(P\ast \mu)(x^{\mu}(s)) - (P\ast \mu)(x^{\nu}(s))| + |(P\ast \mu)(x^{\nu}(s)) - (P\ast \nu)(x^{\nu}(s))| ds \nonumber \\[2mm]
    \leq&~ C_P\int_0^t |x^{\mu}(s) - x^{\nu}(s)|ds  + \int_0^t \|(P\ast \mu)(s,\cdot) - (P\ast \nu)(s,\cdot) \|_{\infty} ds.\label{App-02}
\end{align}
Moreover, using the fact that $u\in\mathcal{F}$, it follows from \eqref{App-01}-\eqref{App-02} that 
\begin{align*}
|x^{\mu}(t)-x^{\nu}(t)|  \leq&~ \int_0^t (C_P+C_B)|x^{\mu}(s) - x^{\nu}(s)|ds  + \int_0^t \|(P\ast \mu)(s,\cdot) - (P\ast \nu)(s,\cdot) \|_{\infty} ds.   
\end{align*}
By Gronwall's inequality, we have
\begin{align*}
|x^{\mu}(t)-x^{\nu}(t)|  \leq&~ \int_0^t e^{(C_P+C_B)(t-s)}~ \|(P\ast \mu)(s,\cdot) - (P\ast \nu)(s,\cdot) \|_{\infty} ds.   
\end{align*}

Denote $\theta(y,z;t)$ the optimal transportation between $\mu$ and $\nu$. Clearly, $\theta(y,z;t)$ has marginals $\mu(t,y)$ and $\nu(t,z)$. Thus we compute 
\begin{align*}
    (P\ast \mu - P\ast \nu)(t,x) =&~
    \int_{\mathbb{R}^d} P(x-y)d\mu(t,y) - \int_{\mathbb{R}^d} P(x-z)d\nu(t,z)\\[2mm]
    =&~\int_{\mathbb{R}^{2d}} [P(x-y) - P(x-z)] d\theta(y,z;t).
\end{align*}
It follows from Jensen's inequality that
\begin{align*}
    |(P\ast \mu - P\ast \nu)(t,x)| 
    \leq &~\left(\int_{\mathbb{R}^{2d}} |P(x-y) - P(x-z)|^2 d\theta(y,z;t)\right)^{1/2}\\[2mm]
    \leq &~C_P \left(\int_{\mathbb{R}^{2d}} |y-z|^2 d\theta(y,z;t)\right)^{1/2} = C_P \mathcal{W}_2(\mu(t),\nu(t)).
\end{align*}
Note that it holds for arbitrary $x\in \mathbb{R}^d$. Thus we know that 
\begin{align*}
|x^{\mu}(t)-x^{\nu}(t)|  \leq&~ C_P \int_0^t e^{(C_P+C_B)(t-s)}~ \mathcal{W}_2(\mu(s),\nu(s)) ds.   
\end{align*}
\end{proof}

Combining Lemma \ref{lemma-a1}-\ref{lemma-a3} with the inequality \eqref{Append-0}, we have 
\begin{align*}
\mathcal{W}_2(\mu(t),\nu(t)) 
\leq &~\mathcal{W}_2(\mathcal{T}^{\mu}_t \sharp \mu_0,\mathcal{T}^{\mu}_t \sharp \nu_0) + \mathcal{W}_2(\mathcal{T}^{\mu}_t \sharp \nu_0,\mathcal{T}^{\nu}_t \sharp \nu_0)\\[2mm]
\leq &~e^{(C_P+C_B)t}~\mathcal{W}_2(\mu_0,\nu_0) + C_P \int_0^t e^{(C_P+C_B)(t-s)} ~\mathcal{W}_2(\mu(s),\nu(s)) ds.
\end{align*}
Then we have 
\begin{align*}
e^{-(C_P+C_B)t}~\mathcal{W}_2(\mu(t),\nu(t)) 
\leq &~\mathcal{W}_2(\mu_0,\nu_0) + C_P \int_0^t e^{-(C_P+C_B)s} ~\mathcal{W}_2(\mu(s),\nu(s)) ds.
\end{align*}
Again, by Gronwall's inequality, we obtain
$$
e^{-(C_P+C_B)t}~\mathcal{W}_2(\mu(t),\nu(t)) \leq e^{C_P t}~ \mathcal{W}_2(\mu_0,\nu_0),\quad t\in [0,T]. 
$$
This completes the proof of the stability with respect to the $\mathcal{W}_2$ distance.
\end{appendices}

\bibliographystyle{abbrv}
\bibliography{ref}

\end{document}